\newcolumntype{L}[1]{>{\raggedright\let\newline\\\arraybackslash\hspace{0pt}}m{#1}}
\numberwithin{equation}{section}
\newtheorem{theorem}{Theorem}[section]
\newtheorem{cor}[theorem]{Corollary}
\newtheorem{lemma}[theorem]{Lemma}
\newtheorem{prop}[theorem]{Proposition} \theoremstyle{definition}
\newtheorem{definition}[theorem]{Definition}
\newtheorem{rem}[theorem]{Remark}
\theoremstyle{plain}
\newtheorem{assumption}{Assumption}
\newcommand{\bra}[1]{\left[#1\right]} 
\newcommand{\mat}[1]{\begin{matrix}#1\end{matrix}} 
\newcommand{\bmat}[1]{\bra{\mat{#1}}} 
\def\beq{\begin{equation} } \def\eeq{\end{equation}}
\def\supp{\hbox{supp}}
\DeclarePairedDelimiterX\set[2]{\{}{\}}{#1\,\delimsize\vert\,#2}
\def\RR{\mathbb R}
\def\ben{\begin{enumerate} }
\def\een{\end{enumerate} }
\def \R{ {\mathbb R}}
\def \p { \partial}
  \def \g{ \mathbf{g}}
\def \Id{ \text{Id} } 
\def \div{ \text{div}}  
\def \C{ \mathbf{C}}
\def \Ups{ \RR^3}
\def \WF{ \text{WF}} 
\def \BtoB{_{\p \to \p}}
\def \CtoS{_{\mathbf C \to \mathbf S}}
\def \CtoB{_{\mathbf C \to \p}}
\def \BtoS{_{\p \to \mathbf S}}
\def \Op{ \mathcal{P}} 
\def \Z{ \mathbf{Z}}
\def \stau{ \sqrt \tau}
\def \sig{\kappa}
\def \q{q}
\renewcommand{\restriction}{\mathord{\upharpoonright}}
\def\g{f} 
\newcommand\PS{{P\!/\!S}}
\def\blfootnote{\gdef\@thefnmark{}\@footnotetext}
\title{Recovery of piecewise smooth density and Lam\'{e} parameters from high-frequency exterior Cauchy data}
\author{Sombuddha Bhattacharyya
\thanks{Department of Mathematics, Indian Institute of Science Education and Research Bhopal, India.}
\and
Maarten V. de Hoop
\thanks{Simons Chair in Computational and Applied Mathematics and Earth Science, Rice University, Houston TX, USA. \texttt{mdehoop@rice.edu}}
\and
Vitaly Katsnelson
\thanks{College of Arts and Sciences, New York Institute of Technology, New York NY, USA. \texttt{vkatsnel@nyit.edu}. }
\and
Gunther Uhlmann
\thanks{Department of Mathematics, University of Washington, Seattle WA, USA. \texttt{gunther@math.washington.edu}}
\thanks{Institute for Advanced Study, The Hong Kong University of Science and Technology, Clear Water Bay, Hong Kong.}
}
\begin{document}
\maketitle

\begin{abstract}

We consider an isotropic elastic medium occupying a bounded domain $\Omega \subset \RR^3$ whose density and Lam\'{e} parameters are piecewise smooth.
 In the elastic wave initial value inverse problem, we are given the solution operator for the elastic wave equation, but only outside $\Omega$ and only for initial data supported outside $\Omega$, and we study the recovery of the density and Lam\'{e} parameters. For known density, results have recently been obtained using the scattering control method to recover wave speeds. Here, we extend this result to include the recovery of the density in addition to the Lam\'{e} parameters under certain geometric conditions using techniques from microlocal analysis and a connection to local tensor tomography.

 \end{abstract}

\subsection*{Acknowledgements
} M.V.d.H. gratefully acknowledges support from the Simons Foundation under the MATH + X program, the National Science Foundation under grant
DMS-1815143, and the corporate members of the Geo-Mathematical Imaging Group at Rice University. G.U. was partly supported by NSF, a Walker Family Endowed Professorship at UW and a Si-Yuan Professorship at IAS, HKUST. S.B. was partly supported by Project no.: 16305018 of the Hong Kong Research Grant Council.

\section{Introduction}
The main goal of this work is to recover a piecewise smooth density of mass in addition to the other elastic parameters in an isotropic elastic setting using exterior measurements.
In general, the wave inverse problem asks for the unknown coefficient(s), representing wave speeds, of a wave equation inside a domain of interest $\Omega$, given knowledge about the equation's solutions (typically on $\p \Omega$). Traditionally, the coefficients are smooth, and the data is the Dirichlet-to-Neumann (DN) map, or its inverse. The main questions are uniqueness and stability: Can the coefficients be recovered from the Dirichlet-to-Neumann map, and is this reconstruction stable relative to perturbations in the data? In the case of a scalar wave equation with smooth coefficients, a number of results by Belishev, Stefanov, Uhlmann, and Vasy \cite{Belishev-multidimenIP,UVLocalRay, SURigidity} have answered the question in the affirmative. For the piecewise smooth case, a novel scattering control method was developed in \cite{CHKUControl} in order to show in \cite{CHKUUniqueness} that uniqueness holds as well for piecewise smooth wave speeds with conormal singularities, under mild geometric conditions. Less is known in the elastic setting as will be described, but several works such as \cite{stefanov2021solidfluid,zhang2020rayleigh,SUV2019transmission} show how to construct an FIO representation of the solution to the elastic wave equation near an interface, which is useful for inverse problems in the hyperbolic elastic setting where coefficients have conormal singularities.
An additional challenge of recovering a coefficient that is not in the principal symbol of the operator is that one needs to solve a tensor tomography problem at some stage of the argument, which has a gauge freedom that obstructs uniqueness \cite{Oksanen2020}. As will be explained, the gauge freedom in our case actually gets exploited to derive an elliptic equation that allows a unique recovery of the density.

 In this work, we recover a piecewise smooth density of mass (in the isotropic elastic wave equation) in the interior of the domain, thereby recovering all three Lam\'{e} parameters. We essentially want to ``image'' the density using high frequency waves. 
    In \cite{SUV2019transmission}, Stefanov, Uhlmann, and Vasy recover piecewise smooth wavespeeds from the Dirichlet-to-Neumann map under certain geometric conditions. 
  Their argument relies on the principal symbol of the elastic operator and the parameterix. As noted in \cite[Remark 10.2]{SUV2019transmission}, their argument does not address unique determination of the density of mass past the first interface, nor at the interface itself. Since the density appears in the lower order part of the elastic operator, it is natural to look at the lower order symbols in the asymptotic expansion of elastic wave solutions. This leads to the inverse problem considered here where we study the lower order constituents of a parameterix, as done by Rachele in \cite{RachDensity} in the smooth case, to recover a piecewise smooth density of mass.

Consider the isotropic elastic wave equation in a bounded domain $\Omega \subset \RR^3$ with smooth boundary. The wave operator for elastodynamics is given as 
\begin{equation}
    \Op=\rho\p^2_t - L
\end{equation}
with
\[
L = \nabla \cdot( \lambda \div \otimes \Id + 2\mu \widehat{\nabla}),
\]
where $\rho$ is the density of mass that we term simply as \emph{density}, $\lambda$ and $\mu$ are the Lam\'{e} parameters, and $\widehat{\nabla}$ is the \emph{symmetric gradient} used to define the strain tensor for an elastic system via $\widehat{\nabla} u = (\nabla u + (\nabla u)^T)/2$ for a vector valued function $u$. Operator $\Op$ acts on a vector-valued distribution $u(x,t)= (u_1,u_2,u_3)$, the \emph{displacement} of the elastic object. We will assume that all three material parameters are piecewise smooth, that is, $C^\infty$ except on a finite set of smooth hypersurfaces (that we describe later) in $\Omega$ with possible jumps there. For the isotropic, elastic setting with smooth parameters, the uniqueness question was settled by Rachele in \cite{Rach00} and Hansen and Uhlmann \cite{HUPolarization}.
 In \cite{CHKUElastic}, the authors extended these results to the isotropic elastic system, where the parameters are piecewise smooth. The main difficulty here is lack of the sharp form of the unique continuation result of Tataru since one has to deal with two different wave speeds.
 The main result of \cite{CHKUElastic} is that under certain geometric assumptions, one can uniquely determination the $\PS$-waveseeds that contain singularities via microlocal analysis, scattering control, and a layer stripping argument akin to \cite{SUVRigidity}. In \cite{CHKUElastic}, it is assumed that the density $\rho$ was trivial, but with similar yet more sophisticated arguments, it was proved in \cite{SUV2019transmission} that this assumption can be dispensed with, and that both piecewise smooth wavespeeds can be recovered from exterior measurements even when the density is piecewise smooth. The simpler case of piecewise analytic and piecewise constant coefficients is considered in \cite{ZhaiPiecewiseAnalytic, OksanenPiecewise2019} and the arguments are quite different than our approach here. 
 In our approach, low frequencies are not required in the data.

Recovering the material density does not simply follow from the arguments in \cite{SUV2019transmission,CHKUElastic}. This is because those arguments only rely on the principal symbol of the elastic operator and the principal term in the high frequency asymptotic expansion of solutions to the elastic wave equation to recover travel times, and the principal symbol contains no information about the density \cite{Rach00,RachDensity}. Rachele also showed in \cite{RachDensity} that the polarization of the waves does not contain information about the density. By looking at the lower order terms of the amplitude of an FIO representation of an elastic wave solution, Rachele shows \cite{RachDensity} that in the smooth setting, one may recover the density as well under certain conditions. That was a global result, but by using the results of \cite{SUVRigidity} in local tensor tomography, Sombuddha shows in \cite{B_density} that one can locally recover the density as well near the boundary in the smooth setting. We aim to extend the results in \cite{RachDensity} to the piecewise smooth setting and recover a piecewise smooth density of mass. We also note that Rachele's results in \cite{Rach00,B_density}
assume that the manifold is simple so that there are no caustics. Here, we do not assume that the manifold is simple, but we do assume a ``convex foliation condition'' that will be described shortly, which allows for caustics, and in particular, is satisfied by manifolds with non-negative curvature. Another novelty is that previous papers \cite{RachBoundary,RachDensity}  used plane waves to recover parameters and lower order amplitude data for tensor tomography, where one can just recover each term in the asymptotic expansion of an FIO representation of the solution. This approach is not known to work to recover such data past an interface. Thus, we use more general distributions and show how to recover ``lower order polarizations'' of elastic waves locally beyond an interface or multiple interfaces.

  In Theorem \ref{Main_Th_1} we show that under certain geometric assumptions and outside a specific set, we can uniquely determine all three Lam\'{e} parameters that contain singularities. We first recover the wave speeds as in \cite{SUV2019transmission,CHKUElastic} via microlocal analysis, scattering control, and a layer stripping argument akin to \cite{SUVRigidity} by recovering and manipulating local travel time data.
   To recover $\rho$ from the boundary up until the first interface, the results of \cite{B_density} may be employed as shown in \cite{SUV2019transmission} where they recover all three parameters up until the first interface, but do not address recovering the density of mass past the first interface.
   To recover the density (and all its derivatives) across the interface, we first send in high frequency waves at the interface and measure their reflected amplitudes (we can do this since we have already recovered the full elastic operator above this interface) to recover the reflection operator at the interface as in \cite{CHKUElastic}. The reflection operator is a classical $0$'th order PsiDO, and we recover all the terms in the polyhomogeneous expansion of its symbol. We may then use the result in
    \cite{BHKU_1} to recover the jet of $\rho$ and all its derivatives infinitesimally below the interface. We then send in high frequency waves that generate a transmitted $P$-wave directly below the interface, which then travels near the interface before returning to it. We measure the amplitude of this $P$-wave to obtain ``lower order polarization data'' which involves the density, and recovering the density gets reduced to a local 2-tensor tomography problem by a careful analysis of this lower order amplitude.  Normally, this would create gauge freedom in the recovery of the density, but we employ the argument of \cite{B_density,RachDensity} to show that the gauge freedom actually leads to an elliptic equation for the density, where we can then locally recover the density in the interior below the interface. We proceed in this way until the density is recovered in all layers.

    A subtle technical point is that we probe the medium with distributions that have wavefront at a single covector (modulo the $\RR^+$ group action) in $T^*\Omega$, and hence, locally and away from any interface, the waves we generate have a wavefront set consisting of a single bicharacteristic. Such distributions do not fall into a specific H\"{o}rmander class of conormal distributions, and our given measurements have the form $FV$ where $F$ is a specific Fourier integral operator with a polyhomogeneous symbol and $V$ is just an arbitrary distribution. To obtain the ``lower order polarization'' amplitudes, we need to extract the principal symbol as well as the lower order symbols associated to $F$ from such data. In Section \ref{s: Weinstein calculus}, we derive a variation of the Weinstein symbol calculus in \cite{weinstein1976} that applies to arbitrary distributions and extend those results to get a product-type formula for the principal symbol of an FIO applied to an arbitrary distribution. The principal term can then be ``peeled off'' to obtain the lower order amplitudes by a similar procedure.

 Most proofs are microlocal to avoid using unique continuation results, but we require an important geometric assumption, which is an \emph{extended convex foliation condition} (see Section \ref{foliation_defn} for the smooth setting) for each wave speed $c_\PS$. As mentioned in \cite{SUVRigidity}, for a particular wave speed, this condition relates to the existence of a function with strictly convex level sets, which in particular holds for simply connected compact manifolds with strictly convex boundaries such that the geodesic flow has no focal points (lengths of non-trivial Jacobi fields vanishing at a point do not have critical points), in particular if the curvature of the manifold is negative (or just non-positive). Thus, caustics are still allowed under this condition since a manifold with non-positive curvature satisfies the condition.
 Also, as explained in \cite{SUV-ElasticLocalRay}, if $\Omega$ is a ball and the speeds increase when the distance to the center decreases (typical
for geophysical applications), the foliation condition is satisfied.

  We denote by $u_h$ the solution to the homogeneous elastic equation on $\RR^3$, with transmission conditions and initial time Cauchy data $h$ (see \eqref{Elastic_eq}and \eqref{e: trans conditions}). All of our function spaces are of the form $X(\cdot; \mathbb{C}^3)$ since we have vector valued functions in the elastic setting, but throughout the paper, we will not write the vector valued part $\mathbb{C}^3$ to make the notation less burdensome. Let $\overline{\Omega}^c$ be the complement of $\overline{\Omega}$ and we define the \emph{exterior measurement operator} $\mathcal F: H^1_c(\overline{\Omega}^c) \oplus L^2_c(\overline{\Omega}^c) \to C^0(\RR_t; H^1(\overline{\Omega}^c)) \cap  C^1(\RR_t; L^2(\overline{\Omega}^c))$ as (see (\ref{e: outside meas operator}) for full details)
\[
\mathcal F: h_0 \to u_{h_0}(t)|_{\overline{\Omega}^c}.
\]
  The operator $\mathcal F$ only measures waves outside $\Omega$ after undergoing scattering within $\Omega$, and it is associated to a particular elastic operator $\Op$ with a set of parameters. Given a second set of elastic parameters $\tilde\lambda$, $\tilde\mu, \tilde \rho$ we obtain analogous operators $\tilde \Op$ and $\tilde{\mathcal F}$. Denote the associated $\PS$ wave speeds $c_{\PS}$ and $\tilde c_{\PS}$. The goal of the work is to prove unique determination of $\mu, \lambda, \rho$ from $\mathcal F$ under some geometric hypotheses.
   From here on, we use $\PS$ to refer to either subscript or wave speed. In addition, to avoid the technical difficulties of dealing with corners or higher codimension singularities of $c_{\PS}$, we always assume that the singular support of $c_{\PS}, \tilde c_{\PS}$ lies in a closed, not necessarily connected hypersurface in $\Omega$; we will deal with corners and edges in a separate paper.

 We assume the Lam\'{e} parameters $\lambda(x)$ and $\mu(x)$ satisfy the \emph{strong convexity condition}, namely that $\mu>0$ and $3\lambda+2\mu >0$ on $\overline{\Omega}$. We also assume that the parameters $\lambda, \mu, \rho$ lie in $L^{\infty}(\RR^3)$ and that $\lambda, \mu, \rho$ are piecewise smooth functions that are singular only on a set of disjoint, closed, connected, smooth hypersurfaces $\Gamma_i$ of $\overline{\Omega}$, called \emph{interfaces}. We let $\Z = \bigcup \Gamma_i$ be the collection of all the interfaces.
 The two wave speeds are $c_P = \sqrt{(\lambda + 2\mu)/\rho}$ and $c_S = \sqrt{\mu/\rho}$, where $\rho$ is the density. In particular, this ensures that $c_P > c_S$ on $\overline{\Omega}$. As in \cite{CHKUControl}, we will probe $\Omega$ with Cauchy data (an \emph{initial pulse}) concentrated close to (but outside $\bar \Omega$) $\Omega$ with a particular polarization. We will denote by
\[
g_{P} = c_{P}^{-2}dx^2, \qquad g_{S} = c_{S}^{-2}dx^2
\]
the two different (rough) metrics associated to the rays. As in \cite{CHKUControl}, we can define the distance functions $d_{\PS}(\cdot, \cdot)$ corresponding to the respective metrics by taking the infimum over all lengths of the piecewise smooth paths between a pair of points. 

Let us define the closed subset $\mathcal{D}:= \{ x \in \Omega ;\ c_P(x) = 2c_S(x)\} \subset \Omega$. In this article we prove the following result.
\begin{theorem}\label{Main_Th_1}
	Assume $\mathcal F = \tilde {\mathcal F} $, and that  $c_{\PS}$, $\widetilde{c}_{\PS}$ satisfy the extended geometric foliation condition (see Section \ref{s: foliation condition}). If $(\lambda,\mu,\rho)=(\tilde{\lambda},\tilde{\mu},\tilde{\rho})$ in $ \Omega^c$,
then $(\lambda,\mu,\rho)=(\tilde{\lambda},\tilde{\mu},\tilde{\rho}) \quad \mbox{inside }\Omega\setminus\mathcal{D}.$

\end{theorem}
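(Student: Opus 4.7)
The argument is a layer-stripping scheme, working interface by interface using the extended convex foliation condition to guarantee microlocal access to successive layers. In each layer we first recover the two wave speeds, and then the density, using a different microlocal object (principal symbol for wave speeds, subprincipal / lower order amplitudes for density). The layer stripping is glued across each interface by recovering the reflection operator and extracting the jet of $\rho$ from it.

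\textbf{Step 1 (first layer, wave speeds).} Between $\partial\Omega$ and the first interface $\Gamma_1$ the coefficients are smooth. Following \cite{SUV2019transmission,CHKUElastic}, I would use $\mathcal F$ to send high-frequency pulses from just outside $\overline\Omega$ that produce $P$ and $S$ bicharacteristic rays not yet hitting $\Gamma_1$; the principal symbol of $\Op$ and the polarization decomposition let one read off the $P$ and $S$ boundary distance functions locally. Applying the local lens/boundary rigidity result under the convex foliation condition gives $c_P$ and $c_S$ in a neighborhood of $\partial\Omega$, and iterating in the foliation parameter fills out the whole first layer.

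\textbf{Step 2 (first layer, density).} With $c_P,c_S$ known the principal symbol of $\Op$ carries no further information, so I would pass to the next term in the asymptotic expansion of a parametrix, as in \cite{RachDensity,B_density}. I send in a Cauchy datum whose wavefront set is a single covector, propagate along a $P$- or $S$-bicharacteristic that exits into $\overline\Omega^c$, and read off the lower order amplitude from $\mathcal F$ using the variant of the Weinstein calculus developed in Section \ref{s: Weinstein calculus} to strip the principal symbol cleanly from $FV$-type data. Expanding the transport equation to one order lower gives, along the ray, an integral of a symmetric $2$-tensor built out of $\rho$ and its first derivatives. Varying the endpoints yields the local $2$-tensor geodesic X-ray transform, to which \cite{SUVRigidity} applies under the foliation condition. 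The usual potential/gauge obstruction is exactly the one observed in \cite{RachDensity,B_density}: the tensor has the structure needed to convert the gauge ambiguity into an elliptic scalar equation for $\rho$, and boundary data (known from $\overline\Omega^c$) pin down the solution uniquely. The condition $c_P\neq 2 c_S$ is precisely what keeps the relevant coefficient nonvanishing, which is why the set $\mathcal D$ must be excluded.

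\textbf{Step 3 (crossing an interface).} Assume inductively that $\lambda,\mu,\rho$ are known in the layer above $\Gamma_j$, so $\Op$ is completely known there. I then send high-frequency $P$ and $S$ pulses that hit $\Gamma_j$, compute the incident wave exactly from the known coefficients above, and subtract it from $\mathcal F h_0$ to isolate the reflected wave at $\Gamma_j$. Microlocally this identifies the full symbol, to all orders, of the reflection operator $R_j$, a classical $0$th-order pseudodifferential operator on $\Gamma_j$. The main result of \cite{BHKU_1} then recovers the jet of $(\lambda,\mu,\rho)$ on the minus side of $\Gamma_j$ to infinite order, so in particular $\rho$ and all its normal derivatives match the true density infinitesimally below $\Gamma_j$.

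\textbf{Step 4 (next layer, wave speeds and density).} To recover coefficients in the layer just below $\Gamma_j$, I use the scattering control construction of \cite{CHKUControl,CHKUElastic} to synthesize an exterior Cauchy datum whose forward evolution produces, to principal order, a single transmitted $P$-wave starting just below $\Gamma_j$ and returning to $\Gamma_j$ after travelling a short arc. Since everything above $\Gamma_j$ is known, knowing $\mathcal F h_0$ is equivalent to knowing this transmitted wave when it re-emerges through $\Gamma_j$. Its principal amplitude and travel time give, as in Step 1, the wave speeds locally below $\Gamma_j$ via local lens rigidity under the foliation condition. Its next-order amplitude, extracted via the Weinstein-type product formula from Section \ref{s: Weinstein calculus}, gives the same lower-order polarization integral as in Step 2, and the same tensor tomography plus ellipticity argument recovers $\rho$ locally below $\Gamma_j$ (again away from $\mathcal D$). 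Iterate this over all interfaces; the extended foliation condition ensures the strata exhaust $\Omega$.

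\textbf{Main obstacle.} The principal difficulty is Step 4: the probing distributions have wavefront set a single covector, so they do not sit in any standard conormal class, and one only has access to their images after scattering and re-transmission through $\Gamma_j$ in the form $FV$ with $V$ an arbitrary distribution. Extracting both the principal symbol and the next-order amplitude from such data requires the extension of the Weinstein symbol calculus developed in Section \ref{s: Weinstein calculus}, together with a careful bookkeeping of how the transmission and reflection coefficients at $\Gamma_j$ mix into the subprincipal part. Controlling these corrections precisely enough to reduce the problem to the clean $2$-tensor transform, and to see the gauge freedom collapse to an elliptic equation for $\rho$ exactly when $c_P\neq 2c_S$, is the technical heart of the argument.
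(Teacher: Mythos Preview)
Your proposal is correct and follows essentially the same route as the paper: layer stripping along the foliation, wave-speed recovery by local lens rigidity, density recovery from the next-order amplitude extracted via the Weinstein calculus, reduction to a local $2$-tensor ray transform whose gauge freedom collapses to an elliptic equation for $\log(\rho/\tilde\rho)$, and interface crossing via the reflection operator together with \cite{BHKU_1}. Two small refinements worth noting: the $2$-tensor that appears involves the \emph{second} derivatives of $\rho$ (leading to a fourth-order PDE, equation \eqref{PDE}), and uniqueness for that PDE is obtained by Protter's unique continuation (Theorem \ref{thm: unique continuation}) after smoothly extending across $\Sigma_q$, not by a boundary-value argument.
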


\begin{rem}
We use the map $\mathcal F$ for mathematical convenience but we could also consider the time dependent Dirichlet-to-Neumann map on a bounded domain for the elastic wave equation instead. As shown in \cite{Rach00} and \cite{SUV2019transmission}, the Dirichlet-to-Neumann map determines $\mathcal F$ since one can uniquely determine all three material parameters and their derivatives at the boundary from the Dirichlet-to-Neumann map, which then allows one to smoothly extend them to all of $\RR^3$.
\end{rem}

\begin{rem}
	We do not need the data to be measured for the whole time $\RR_t$; instead it is sufficient to take measurement for a finite time $(0,2T)$ where $T>0$ is the maximum time that a $S$-wave takes to travel past $\Omega$. Observe that since $c_P>c_S$ therefore, $T>0$ is enough time for $P$-waves to travel past $\Omega$. We will give the precise definition of $T$ in Section \ref{s: foliation condition}.
\end{rem}



\subsection*{Elastic wave equation and transmission conditions}
In this section, let us give the basic definitions and setup of the elastic wave equation that we study in the main theorem. Recall that $\Omega \subset \R^3$ is a smooth, bounded domain. We consider two extensions of our domain $\Omega\subset \R^3$ as $\Omega \Subset \Theta \Subset \Upsilon \Subset \R^3$, where $A\Subset B$ denotes that $A$ is compactly contained in $B$.
We probe $\Omega$ with an initial pulse supported in $\Theta\setminus\overline{\Omega}$, where $\Theta$ is a small neighbourhood of $\overline{\Omega}$, and a control that is supported in $\Upsilon\setminus\overline{\Omega}$, which is a rather large region in $\R^3$. For an incoming elastic wave $u_I$ that enters $\Omega$, we obtain an array of output scattered waves outside $\Omega$ after hitting one or more interfaces. From the measurements of the input and the scattered outputs, the goal is to recover the piecewise smooth coefficients in $\Omega$.

Define the \emph{$P$-depth} $d^*_{\Theta}(x)$ of a point $x$ inside $\Theta$:
\[
d^*_{\Theta}(x) = \begin{cases}
+d_P(x,\p \Theta), & x \in \Theta ,\\
-d_P(x, \p \Theta), & x \notin \Theta.
\end{cases}.
\]
We use the (rough) metric $g_P$ since finite speed of propagation for elastic waves is based on the faster $P$-wave speed. We will add to the initial pulse a Cauchy data control (a \emph{tail}) supported outside $\Theta$, whose role is to remove multiple reflections up to a certain depth, and is controlled by a time parameter in $(0,\text{diam}_P\Omega)$. Since we will probe all of $\Omega$, fix any $T >\text{diam}_S\Omega$. We only require controls supported in a sufficiently large Lipschitz neighborhood $\Upsilon\subsetneq\RR^3$ of $\overline{\Theta}$ that satisfies $d_S(\partial \Upsilon, \overline{\Theta})>2T$ and is otherwise arbitrary. Thus, we only require elastic wave solutions on the finite time interval $(0, 2T)$.

Define the Neumann operator at $\Gamma$ as the normal component of the stress tensor, given as
\begin{equation}\label{elastic_Neumann}
\mathcal N_{\pm} u = (\lambda \div \otimes \text{I} + 2\mu \hat \nabla)u \cdot \nu|_{\Gamma_\pm},
\end{equation}
where $\nu$ is a fixed unit normal vector at $\Gamma$.
The Cauchy problem for the isotropic, inhomogeneous elastic wave equation we consider is
\begin{equation}\label{Elastic_eq}
\begin{aligned}
\Op u(t,x) =& 0,\qquad &&\mbox{in }\RR_t\times\R^3,\\
u|_{t=0} = \psi_0,\quad \p_t u|_{t=0} =& \psi_1, \qquad &&\mbox{in }\R^3,
\end{aligned}
\end{equation}
where $\psi_0,\psi_1$ are compactly supported in $\mathbb R^3\setminus\overline{\Omega}$. We also impose the transmission conditions at each interface $\Gamma_i$
\begin{equation}\label{e: trans conditions}
[u] = [\mathcal N u] = 0 \qquad \text{ on } \Gamma_i,
\end{equation}
where $[v]$ stands for the jump of $v$ from the exterior to the interior across $\Gamma_i$.
Let us also define the function spaces which will be useful in the analysis.
Define the space for the Cauchy data
\begin{equation}\label{Cauchy_data_set}
\C := H^1_0(\Upsilon;\mathbb{C}^3) \oplus L^2(\Upsilon;\mathbb{C}^3).
\end{equation}
Next, define $F$ to be the solution operator for the elastic wave initial value problem:
\begin{equation}
F: H^1(\RR^n)\oplus L^2(\RR^3) \to C(\RR, H^1(\RR^3))
\qquad
F(h_0,h_1)=u \text{ s.t. }
\begin{cases}
	\Op u &= 0,\\
				u\restriction_{t=0} &= h_0,\\
			\p_t u\restriction_{t=0} &= h_1.
	\end{cases}
\end{equation}
Thus, the outside measurement map may be written as
\begin{equation}\label{e: outside meas operator}
\mathcal F(h) = F(h)(t)|_{\overline \Omega^c}, \qquad h \in \mathbf C.
\end{equation}
Let $R_s$ propagate Cauchy data at time $t=0$ to Cauchy data at $t=s$:
\begin{equation}
R_s = (F, \p_t F)\Big|_{t=s}:H^1(\RR^3) \oplus L^2(\RR^3) \to H^1(\RR^3)\oplus L^2(\RR^3).
\end{equation}

\section{Geometric assumptions and notation}
 In this section, we introduce our notation and state the geometric assumptions we make.
All the definitions and results in this section are a brief summary of what was defined in \cite{B_density,CHKUControl,CHKUElastic}. Recall that $\Z$ is the collection of closed connected hypersurfaces $\Gamma_j$, for $j=0,1,\dots,n$ in $\overline{\Omega}$ where $\Gamma_0 = \partial\Omega$ and the parameters are smooth on each connected component of $\R^3\setminus\Gamma$.

The principal symbol of the hyperbolic operator $\Op$ is given as
\begin{equation}\label{princ_symb}
p(t,x,\tau,\xi) = -\rho\left[\left(\tau^2 - c_S^2|\xi|^2\right)I - \left(c_P^2-c_S^2\right)(\xi\otimes\xi)\right].
\end{equation}
One can calculate the lower order terms in the full symbol of $\Op$, and the order one term is 
\begin{equation}\label{sub-princ_symb}
p_1(t,x,\tau,\xi) = -i\left[\nabla_x \lambda \otimes\xi +(\nabla_x\mu\cdot\xi)I + \xi\otimes\nabla_x\mu\right].
\end{equation}
From the principal symbol, we readily observe that $\xi$ and $\xi^{\perp}$
are eigenvectors of $p(t,x,\tau,\xi)$ with eigenvalues $c_P$ and $c_S$ respectively. The eigenspace corresponding to $c_P$ is one dimensional whereas the eigenspace corresponding to the eigenvalue $c_S$ is two dimensional.

\subsection*{Geodesics and bicharacteristics}
The bicharacteristics curves $\gamma^{\pm}_{\PS}$ in $T^*(\R\times\Ups)$ are the integral curves of the Hamiltonian vector fields $V_{H^{\pm}_{\PS}}$, where $H^{\pm}_{\PS} = \tau \pm c_{\PS}|\xi|$ along with the condition that $\gamma^{\pm}_{\PS}$ lies in the set $\{\text{det }p(t,x,\tau,\xi) = 0\}$.
Parametrized by $s\in \R$, one obtains
\begin{equation}\label{parametrization_bichar}
\frac{dt}{ds} = c_{\PS}^{-1}, \qquad
\frac{dx}{ds} = \pm \frac{\xi}{|\xi|}, \qquad
\frac{d\tau}{ds} = 0, \qquad
\frac{d \xi}{ds} = \mp |\xi| \nabla_x (\log c_{\PS}),
\end{equation}
along with the condition that $\text{det }p(t,x,\tau,\xi) = 0$.
We refer to $\gamma_{\PS}^{\pm}$ as the forward and backward $\PS$ bicharacteristic curves.

\begin{definition}
	We refer to a piecewise smooth curve $\gamma_{\PS}:I\subset \R \to \overline{\Omega}$ as a unit speed broken geodesic in $(\overline{\Omega},g_{\PS})$ if
	\begin{enumerate}
		\item $\gamma_{\PS}$ is a continuous path which is smooth in $\overline{\Omega}\setminus\Gamma$,
		\item each smooth segment of $\gamma_{\PS}$ is a unit speed geodesic with respect to $g_{\PS}$,
		\item $\gamma_{\PS}$ intersects $\Gamma$ at only finitely many points $t_i \in I$ and all the intersections are transversal,
		\item $\gamma_{\PS}$ obeys Snell's law of refraction for elastic waves where it cuts $\Gamma$.
	\end{enumerate}
A \emph{broken bicharacteristic} is a path in $T^*\RR^n$ of the form $(\gamma,\gamma'^\flat)$, the flat operation taken with respect to $g_P$ or $g_S$ as appropriate. Note that a broken geodesic defined this way may contain both $P$ and $S$ geodesic segments. More precisely, a broken bicharacteristic (parameterized by a time variable) can be written as $\gamma: (t_0,t_1) \cup (t_1, t_2) \cup \dots \cup (t_{k-1},t_k) \to T^*\RR^n \setminus \Gamma$, which is a sequence of bicharacteristics connected by reflections and refractions obeying Snell's law: for $i = 1, \dots, k-1$,
\begin{equation}
\gamma(t_i^-), \gamma(t_i^+) \in T_{\Gamma}^*(\mathbb R^n),
\qquad \qquad (d\iota_\Gamma)^*\gamma(t_i^-) = (d\iota_\Gamma)^*\gamma(t_i^+),
\end{equation}
where $\iota_\Gamma: \Gamma \hookrightarrow \RR^n$ is the inclusion map and $\gamma(t_i^\mp) = \lim_{t \to t_i^\mp}\gamma(t)$. We always assume that $\gamma$ intersects the interfaces transversely since for our parametrix construction, we assume that solutions have wave front set disjoint from bicharacteristics tangential to some of interfaces. Each restriction $\gamma|_{(t_i,t_{i+1})}$ is a $P$-\emph{bicharacteristic}, respectively $S$-\emph{bicharacteristic} if it is a bicharacteristic for $\p^2_t - c_P \Delta$, respectively $\p^2_t - c_S \Delta$. We also refer to each such bicharacteristic as a \emph{branch} of $\gamma$; we are sometimes more specific and write $P$ branch or $S$ branch if we want to specify the associated metric. For each $i$, note that $\gamma(t_i)$ projected to the base manifold is a point of $\Gamma_{k_i}$ for some $k_i$. A branch $\gamma|_{(t_i,t_{i+1})}$ is \emph{reflected} if the inner product of $\gamma'(t_i^+)$ and $\gamma'(t^-_i)$ (when projected to base space) with a normal vector to $\Gamma_{k_i}$ have opposite signs. Otherwise, it is a \emph{transmitted branch}. Say that $\gamma|_{(t_i,t_i+1)}$ is a \emph{mode converted branch} if it is a $\PS$ branch and $\gamma|_{(t_{i-1},t_i)}$ is a $S/P$ branch.

A \emph{purely transmitted P/S broken geodesic} (a concatenation of smooth $P$ or $S$ geodesics) is a unit-speed broken geodesic that consists of only $\PS$ transmitted branches; that is, the inner products of $\gamma'(t_i^-)$ and $\gamma'(t_i^+)$ with the normal to $\Gamma$ have identical signs at each $t_i$ and they are all either $P$ geodesics or $S$ geodesics. A \emph{purely transmitted P/S broken bicharacteristic} is then defined the same way using projection to base space.

\end{definition}
Note that unlike \cite{CHKUElastic}, these unit speed broken geodesics are either purely $P$-geodesics or purely $S$-geodesic. We define the broken $\PS$ bicharacteristics as $(\gamma_{\PS},\dot\gamma_{\PS}^{\flat})$ where $\gamma$ is a unit speed $\PS$ broken geodesic and the operation `$\flat$'(flat) is taken with respect to the metric $g_{\PS}$.

\subsection*{Foliation Condition}\label{s: foliation condition}

We assume that the domain $\Omega$ has an \emph{extended convex foliation} with respect to both of the metrics $g_{P}$ and $g_S$. This is an extension of the convex foliation condition given in \cite{UVLocalRay} to the piecewise smooth setting and was introduced in \cite[Definition 3.2]{CHKUElastic}.
\begin{definition}[Extended convex foliation]\label{foliation_defn}
	We say  $\sig : \overline{\Omega} \mapsto [0,\tilde{q}]$ is a (piecewise) extended convex foliation for $(\Omega,g_{\PS})$ if
	\begin{enumerate}
		\item $\sig$ is smooth and $d\sig \neq 0$ on $\overline{\Omega}\setminus\Gamma$.
		\item $\sig$ is upper semi-continuous.
		\item each level set $\sig^{-1}(q)$ is geodesically convex with respect to $g_{P}$ and $g_S$, when viewed from $\sig^{-1}((q,\tilde q))$ for any $t \in [0,\tilde{q})$.
		\item $\partial\Omega = \sig^{-1}(0)$ and $\sig^{-1}(\tilde{q})$ has measure zero.
		\item there is some $q_i \in [0,\tilde{q}]$ such that $\Gamma_i \subset \sig^{-1}(q_i)$ for $i=0,\dots,m$.
		\item $\limsup_{\epsilon \to 0+}c_{\PS}|_{\sig^{-1}(q+\epsilon)}
		\leq \limsup_{\epsilon \to 0^+} c_{\PS}|_{\sig^{-1}(q-\epsilon)}$ whenever $\Gamma_i \subset \sig^{-1}(q)$ for some $i$ and $\Gamma_i$.
	\end{enumerate}
We say $(c_P,c_S)$ satisfies the \emph{extended foliation condition} if there exists an extended convex foliation for $(\Omega, c_\PS)$.
\end{definition}

See the discussion below \cite[Definition 3.2]{CHKUElastic} for an explanation of the last condition. We write $\Omega_{q}:= \sig^{-1}(q,\tilde{q}]$ to denote the part of the domain whose boundary is $\Sigma_q:= \sig^{-1}(q)$.
Let us fix the convention of writing `above' $\Sigma_q$ to be outside of $\Omega_q$ and `below' to be inside $\Omega_q$.
We write $\Sigma^{\pm}_q$ to denote two copies of $\Sigma_q$ approached from 'above' or `below' $\Sigma_q$.
Let us observe that, if required, we can extend each $\Gamma_j$ along with $\Sigma_{q_j}$ and denote $\Omega_j$ to be the connected components of $\overline{\Omega}\setminus \Gamma$. Write $\widetilde{\Omega}_j$ to be $\Omega_{q_j}$, where $\Gamma_j\subset\Sigma_{q_j}$.

\begin{definition}
	We define the set of the inward or the outward pointing covectors at a closed connected hypersurface $\Sigma$ as
	\begin{equation*}
	T_{\pm}^*\Sigma := \{ (x,\xi) \in T^*\Sigma : \pm \langle \xi,d\sig\rangle > 0 \},
	\end{equation*}
	where the above inner product is taken in the Euclidean sense.
	
	For a domain $\Omega_{q}$ we define
	\begin{equation*}
	\partial_{\pm}T^*\Omega_{q} := \{ (x,\xi) \in T^*\Omega ; \pm \langle \xi,d\sig\rangle > 0 \}.
	\end{equation*}
\end{definition}

With the help of the notion of the inward or outward covectors, we define the \emph{\q-interior travel time} and \emph{lens relation} in $\Omega$.
\begin{definition}
	Let $(x,\xi) \in T_{+}^*\Sigma_{\q}\setminus\{0\}$. Let $\gamma_{\PS}$ be the unit speed broken geodesic such that
	\begin{equation*}
	\lim\limits_{t\to 0^+}\left(\gamma_{\PS}(t),\dot{\gamma}_{\PS}(t)\right) = (x,\xi).
	\end{equation*}
	\begin{enumerate}[(i)]
		\item  We define the $q$-interior travel time $l_{\PS,q}(x,\xi)>0$ such that $\gamma_{\PS}(l_{\PS,\q}) \in \Sigma_{\q}$.
		\item The $q$-interior $\PS$ lens relation is $L_{\PS,\q}(x,\xi) := \left(\gamma_{\PS}(l_{\PS,\q}),\dot{\gamma}_{\PS}(l_{\PS,\q})\right) \in T^*_{-}\Sigma_{\q}$, where $l_{\PS,\q}(x,\xi)$ is defined as above.
	\end{enumerate}
\end{definition}

We end this section by summarizing all the assumptions we have made so far.
\begin{assumption}
	Let us assume all the notations and definitions above. We assume that:
	\begin{enumerate}[(i)]
		\item the interfaces $\Gamma_j$ are a collection of disjoint, connected, closed hypersurfaces in $\overline{\Omega}$,
		\item $\Omega$ has an extended convex foliation $\sig$ with respect to both the metrics $g_{\PS}$.
		
	\end{enumerate}
\end{assumption}

\subsection*{FIOs and elastic parametrix}
In this section we show the microlocal parametrix construction for the system \eqref{elastic_Neumann} in the smooth setting similar to \cite{RachBoundary}. Since many of our argument in the proof of the main theorem are local, this will suffice. The full parametrix with transmission conditions where the material parameters have discontinuities is in Appendix \ref{a: elastic parametrix}.
First, the geometric optics solution for the initial value problem \eqref{Elastic_eq} when the material parameters are smooth has the form
\begin{equation*}
U = E_0 f_0 + E_1 f_1,
\end{equation*}
where $E_k$, $k=0,1$ are the solution operators given in terms of Fourier Integral Operators (FIOs). We write $A\equiv B$ for two FIOs $A$ and $B$ to denote that $A$ is same as $B$ modulo a smoothing operator.
We impose that the FIOs $E_0$ and $E_1$ solve the following system modulo a smoothing operator.
\begin{equation}\label{E_k PDE}
\begin{aligned}
\Op E_k &\equiv 0 \quad &&\mbox{on } (0,\infty) \times \R^3,\\
E_0|_{t=0} &\equiv I, \quad \partial_t E_0|_{t=0} \equiv 0, \quad&& \mbox{on }\R^3,\\
E_1|_{t=0} &\equiv 0, \quad \partial_t E_1|_{t=0} \equiv I, \quad&& \mbox{on }\R^3.
\end{aligned}
\end{equation}
The FIOs $E_k$ for $k=0,1$ are given as
\begin{equation}\label{E_k FIO}
\begin{aligned}
E_k v
=&\sum_{\pm, p/s, l} \int_{\R^3} e^{i\phi^{\pm}_{\PS}(t,x,\xi)} a^{\cdot,l}_{\pm,k,\PS}(t,x,\xi) \hat{v}_l(\xi)\,d\xi, \quad \mbox{where }v = (v_1,v_1,v_3).
\end{aligned}
\end{equation}
%
The phase function $\phi^{\pm}_{\PS}(t,x,\xi)$ is homogeneous of order 1 in $\xi$ and solves the eikonal equation
\begin{equation}\label{Eqn for phi_1}
\det{p(t,x,\partial_t \phi^{\pm}_{\PS},\nabla_{x}\phi^{\pm}_{\PS})} = 0.
\end{equation}
One can simplify the eikonal equation to
\begin{equation}\label{Eqn for phi_2}
\partial_t \phi^{\pm}_{\PS} = \mp c_{\PS} \lvert \nabla_{x}\phi^{\pm}_{\PS} \rvert.
\end{equation}
We can choose the initial value to be $\phi^{\pm}_{\PS}|_{t=0} = x \cdot \xi$ and solve the above equation using Hamilton-Jacobi theory.

\begin{rem}
	Note that the phase function $\phi^{\pm}_{\PS}$ can be determined by the principal symbol $p$ and thus, the wave speeds $c_{\PS}$. The principal symbol of $E_k$ is also determined by the wave speeds and not the density \cite{RachBoundary}. Therefore, if one needs to recover the parameters $\rho,\lambda,\mu$ individually, then one must consider the lower order terms of the asymptotic expansion of $a^{\cdot,l}_{\pm,k,\PS}$.
\end{rem}
Unlike the phase function $\phi^{\pm}_{\PS}$, the amplitudes cannot be determined using only the principal symbol $p$.
We do an asymptotic expansion of the amplitudes $a^{\cdot,l}_{\pm,k,\PS}(t,x,\xi)$ as
\begin{equation*}
a^{\cdot,l}_{\pm,k,\PS}(t,x,\xi)
= \sum_{J = 0, -1, -2, \dots}\left(a^{j,l}_{\pm,k,\PS}\right)_{J},
\end{equation*}
where $(a^{\cdot,l}_{\pm,k,\PS})_{J}$ are homogeneous of order $J$ in $\lvert \xi \rvert$.
Now, each $3\times 3$ matrices $\left(a^{\cdot,l}_{\pm,k,\PS}\right)_J(t,x,\xi)$ satisfies
\begin{equation}\label{Eqn for a-J}
\begin{aligned}
p(t,x,\partial_t \phi^{\pm}_{\PS},\nabla_{x}\phi^{\pm}_{\PS})(a_{\pm,k,\PS})_{J-1}
= B_{\PS}(a_{\pm,k,\PS})_{J} + C_{\PS}(a_{\pm,k,\PS})_{J+1},
\end{aligned}
\end{equation}
where $(a_{\pm,k,J})_1 = 0$ and
the matrix operators $B_{\PS}$, $C_{\PS}$ are given as
\begin{equation}\label{Terms B C}
\begin{aligned}
(B_{\PS}V)
&= i\partial_{\tau,\xi}p\left(t,x,\partial_t \phi^{\pm}_{\PS}, \nabla_x \phi^{\pm}_{\PS}\right) \cdot \partial_{t,x}V
+ i p_1(t,x,\partial_t \phi^{\pm}_{\PS}, \nabla_x \phi^{\pm}_{\PS})V\\
& \qquad \qquad + i\frac{1}{2}\sum_{\lvert \beta \rvert = 2} \partial^{\beta}_{\tau,\xi} p\left(t,x,\partial_t \phi^{\pm}_{\PS}, \nabla_x \phi^{\pm}_{\PS}\right) \cdot \left( \partial^{\beta}_{t,x} \phi^{\pm}_{\PS} \right)V,\\
\left(C_{\PS}V\right)
&= i\partial_{\tau,\xi}p_1\left(t,x,\partial_t \phi^{\pm}_{\PS}, \nabla_x \phi^{\pm}_{\PS}\right) \cdot \partial_{t,x}V\\
& \qquad \qquad + \frac{1}{2}\sum_{\lvert \beta \rvert = 2}\partial^{\beta}_{\tau,\xi}p\left(t,x,\partial_t \phi^{\pm}_{\PS}, \nabla_x \phi^{\pm}_{\PS}\right) \cdot \partial^{\beta}_{t,x} V.
\end{aligned}
\end{equation}
In order to calculate the explicit form of $(a^{j,l}_{\pm,k,\PS})_{J},$ let us define the unit vectors $N= \frac{\nabla_x \phi^{\pm}_{P}}{\lvert \nabla_x \phi^{\pm}_{P}\rvert}$ and define $N_1$, $N_2$,
such that $\{N_1, N_2\}$ forms an orthonormal basis of the kernel of $p(t,x,\partial_t \phi^{\pm}_{S},\nabla_{x}\phi^{\pm}_{S})$.
Observe that the unit vector $N$ spans the kernel of $p(t,x,\partial_t \phi^{\pm}_{P},\nabla_{x}\phi^{\pm}_{P})$ and $\{N_1, N_2\}$ form an orthonormal basis for the kernel of $p(t,x,\partial_t \phi^{\pm}_{S},\nabla_{x}\phi^{\pm}_{S})$.
Now, let us write
\begin{equation}\label{asym_exp}
\begin{aligned}
(a^{\cdot,l}_{\pm,k,P})_{J}
&= (h^{\cdot,l}_{\pm,k,P})_{J} + (b^{l}_{\pm,k,P})_{J}N, \qquad l =1,2,3,\\
(a^{\cdot,l}_{\pm,k,S})_{J}(t,x,\xi)
&= (h^{\cdot,l}_{\pm,k,S})_{J}
+ \left[(b^{l}_{\pm,k,S})_{1,J}N_1 + (b^{l}_{\pm,k,S})_{2,J}N_2\right],
\end{aligned}
\end{equation}
where $(h^{\cdot,l}_{\pm,k,\PS})_{J}$ is perpendicular to the kernel of $p(t,x,\partial_t \phi^{\pm}_{\PS}, \nabla_x \phi^{\pm}_{\PS})$ for $J \leq -1$ with $(h^{\cdot,l}_{\pm,k,\PS})_{0} = 0$.

Observe that from \eqref{Eqn for a-J} we obtain a necessary condition that
\begin{equation}\label{Compatibility condition}
N_{\PS} \left[ B_{\PS}(a_{\pm,k,\PS})_{J} + C_{\PS}(a_{\pm,k,\PS})_{J+1} \right] = 0, \quad \mbox{for } J=0,-1,-2,\dots,
\end{equation}
where $N_P = N$ and $N_S = N_1, N_2$.

\section{Symbol of an FIO applied to an arbitrary distribution} \label{s: Weinstein calculus}
In a layer stripping procedure to recover the density of mass, we will use scattering control to generate internal sources with a specified wavefront set (as done in \cite{SUV2019transmission,CHKUElastic}). The data that we will be able to recover from the outside measurement operator has the form $FV$ where $V$ is more or less an arbitrary distribution (the internal source we want to generate) and $F$ is a Fourier integral operator representation of the elastic wave propagator. We need to recover the terms in the polyhomogeneous expansion of the symbol of $F$ from such data, and in particular show that they vanish when doing a uniqueness argument. The main idea is quite natural and an analogous argument can be found in \cite[Example 2.6]{Dencker-Polarization}. When $A$ is a pseudodifferential operator and $u$ is a compactly supported distribution that is smooth outside $\{0\}$, $\widehat{Au}(\xi)$ has an expansion involving the full symbol of $A$ and and its derivatives, together with $\hat u$ and its derivatives, evaluated at $(0,\xi)$. When $Au \in C^\infty$, then the terms in this expansion vanish, which creates algebraic equations where one can extract information regarding the principal symbol of $A$ as well as its lower order symbols in the classical symbol expansion. Our aim is to do something similar when $A$ is a Fourier integral operator whose associated Lagrangian is a canonical graph, and the formalism of Weinstein's symbol calculus in \cite{weinstein1976} will be natural to obtain a similar expansion by associating a principal symbol to an arbitrary distribution.

In a seminal paper \cite{weinstein1976}, Weinstein showed how to define the principal symbol of an arbitrary distribution and showed a product type formula for the principal symbol of a PsiDO applied to an arbitrary distribution. Here, we tweak several argument in \cite{weinstein1976} to compute the principal symbol of a canonical graph FIO applied to a distribution. We state the necessary lemmas and propositions we will use in this manuscript but postpone the proofs to Appendix \ref{s: proofs of Weinstein symbol stuff} to not interrupt the flow of the article.

\subsection{Weinstein symbols of arbitrary distributions}
We use the formulation of Weinstein to define the symbol of a distribution. First, fix $(x_0, \xi) \in T^*\RR^n$. Let $\g \in \mathcal D'(\RR^n)$ and let $\phi: \RR^n \to \RR$ be a $C^\infty$ function with $\phi(x_0) = 0$ and $d\phi(x_0) = \xi$. We define the distribution $\g_\phi^\tau$ for $\tau \geq 1$ by
\[
\langle \g_\phi^\tau, u \rangle
= \langle \g, \tau^{n/2} e^{-i \tau \phi(x)} u( \sqrt \tau (x-x_0)) \rangle
\]
where $u$ is a test function.
If $\g \in L^1$, then $\g^\tau _\phi =( \g e^{-i \tau \phi})((x-x_0)/\sqrt{\tau})$.
Essentially, $\g^\tau_\phi = T_{x_0}D_{1/\sqrt \tau} M_{e^{-i\tau \phi}}  \g$ where $M_\bullet$ is multiplication operator, $D_\bullet$ is dilation operator, and $T_{x_0}$ is a translation operator so this distribution is well defined. We will usually use $\phi = (x - x_0) \cdot \xi$ so we leave out the subscript $\phi$ in this case and just write $g^\tau$. It is also useful to denote $u_\tau  = \tau^{n/2} e^{-i \tau \phi(x)} u( \sqrt \tau (x-x_0))$ so that $\langle \g^\tau , u \rangle = \langle \g , u^\tau \rangle.$

We also need a notion of ``order'' of a distribution by measuring the growth rate of $\g^\tau$. We have the following definition from \cite[Definition 1.1.3, 1.1.7]{weinstein1976}.

\begin{definition}\label{d: order of distribution}
If $\mathfrak E$ is an vector space with a distinguished class of subsets called ``bounded sets'', we denote by $S^N(\mathfrak E)$ the set of families $[1, \infty) \ni \tau \mapsto \g^\tau \in \mathfrak E$ for which the set $\{ \tau^{-N} \g^\tau | \tau \geq 1\}$ is bounded. If $\tau \mapsto \g^\tau$ belongs to $S^N(\mathfrak E)$, we write $\g^\tau = O(\tau^N)$.
If the set $S = \{ N | \g^\tau = O(\tau^N)\}$ is of the form $[a, \infty)$, we define \emph{the order of} $\g$ at $(x_0, \xi)$ to be $a$ and denote it by $O_{x_0, \xi_0}(\g)$. If $S = (-\infty, \infty)$, we define $O_{x_0} (\g) = -\infty$.

 Let $ \g \in \mathcal D'(\RR^n)$ be such that $O_{(0,\xi)}(\g) \leq N$. We define the \emph{Weinstein symbol} of order $N$
\[
\sigma^N_{(x_0,\xi)}(\g)
\]
 of $\g$ at $(x_0,\xi) \in T^*\RR^n$ is
 the image in $S^N(\mathcal D'(\RR^n)) / S^{N-1}(\mathcal D'(\RR^n))$
of the family $\g_\phi^\tau$ under the natural map
$S^N(\mathcal D'(\RR^n)) \to S^N(\mathcal D'(\RR^n)) / S^{N-1}(\mathcal D'(\RR^n))$.
\end{definition}
This is a slight deviation from \cite[Definition 1.3.6]{weinstein1976} which instead involves $\phi$ and certain equivalence classes, but as mentioned there, in computing $\sigma^N_{(x_0, \xi)} (\g)$ for a specific $\g$, it suffices to determine the behavior modulo $S^{N-1}(\mathcal D'(\RR^n))$ of the family $\g^\tau_\phi$ for a particular $\phi$ with $d\phi(x_0) = \xi$. In this paper, we will only use $\phi(x) = (x-x_0) \cdot \xi$.

Also, following \cite[Definition 1.3.12]{weinstein1976}, a given distribution is \emph{homogeneous of degree $N$} at $(x_0, \xi)$ if for a particular $\phi$ with $d\phi(x_0) = \xi$, $\g_\phi^\tau = \tau^N \gamma + O(\tau^{N-1/2})$ where $\gamma$ is a fixed element of $\mathcal D' (\RR^n)$. This is equivalent to the definition in \cite{weinstein1976} and then the symbol of $g$ can be identified with $\gamma$ as described there. For the distributions we consider here, it will suffices to pair $\g^\tau$ with an arbitrary test function and determine the leading order term in $\tau$ as $\tau \to \infty$.

As described in \cite[Section 1.6]{weinstein1976}, the above definitions and concepts naturally extend to vector-valued distributions on a manifold using natural identifications \[
\mathcal D'(\RR^n, V) \approx \mathcal D'(\RR^n) \otimes_{\mathbb C} V\]
where $V$ is a finite dimensional vector space over $\mathbb C$.

\subsection{Symbol of a PsiDO applied to a distribution}
Here we state few results which are minor modifications of Results in \cite{weinstein1976},
whose proofs we provide in Appendix \ref{s: proofs of Weinstein symbol stuff}.

 \begin{lemma}  \label{l:PSIDO applied to u_tau}
 Let $P \in \Psi^m$ be properly supported with a principal symbol representative $p_m(x, \xi)$. Let $\g \in \mathcal D'(\RR^n)$ and $(x_0, \xi) \in T^* \RR^n \setminus 0$ with $O_{(x_0, \xi)} (\g) \leq N$.
Then $O(P\g) \leq m+N$ and
\[
\sigma^{N+m}_{(x_0,\xi)}(P\g) =
p_m(x_0, \tau \xi) \sigma^N_{(x_0,\xi)}(\g).
\]
In fact, given a test function $u \in \mathcal D(\RR^n)$, we have
\[
Pu_\tau (x) = p_m(x_0, -\tau \xi) u_\tau(x) + O(\tau^{m-1/2})
\]
and
\[
\langle (P\g)^\tau , u \rangle=
p_m(x_0, \tau \xi) \langle \g^\tau, u \rangle + O(\tau^{m+N-1/2})
\]
as $\tau \to \infty$.
\end{lemma}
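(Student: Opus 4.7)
The proof proceeds in two stages: first a direct calculation establishes the pointwise expansion of $Pu_\tau$, and then an adjoint--type argument transfers this to the distributional pairing. The guiding intuition is that $u_\tau$ oscillates at frequency $-\tau\xi$, so the full symbol $p(x,\eta)$ of $P$ is effectively frozen at $(x_0,-\tau\xi)$, with each correction gaining a factor of $\tau^{-1/2}$ because the natural scales for the $x$ and $\eta$ variables are $y/\sqrt\tau$ and $\sqrt\tau\,\zeta$ respectively.

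For the pointwise statement, I would begin by computing
\[
\hat{u}_\tau(\eta)=e^{-ix_0\cdot\eta}\,\hat u\!\left(\frac{\eta}{\sqrt\tau}+\sqrt\tau\,\xi\right)
\]
by a direct change of variables, substitute into the oscillatory integral representation $Pu_\tau(x)=(2\pi)^{-n}\int e^{ix\cdot\eta}p(x,\eta)\hat u_\tau(\eta)\,d\eta$, and then change variables $\eta=-\tau\xi+\sqrt\tau\,\zeta$ to center the integration at the effective frequency $-\tau\xi$. After the additional rescaling $x=x_0+y/\sqrt\tau$, the integrand involves $p(x_0+y/\sqrt\tau,\,-\tau\xi+\sqrt\tau\,\zeta)$ paired against $e^{iy\cdot\zeta}\hat u(\zeta)$. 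Taylor expanding the symbol $p$ around $(x_0,-\tau\xi)$ organizes the output into powers of $\tau^{-1/2}$: each $x$--derivative picks up the factor $y/\sqrt\tau$ while each $\eta$--derivative lowers the symbol order by one but is multiplied by $\sqrt\tau\,\zeta$. The Schwartz decay of $\hat u$ confines the effective $\zeta$ integration to $|\zeta|\lesssim\sqrt\tau$ (where $p_m$ is smooth and homogeneous of degree $m$, so derivatives obey the uniform bound $|\partial_x^\beta\partial_\eta^\alpha p_m|\lesssim\langle\eta\rangle^{m-|\alpha|}\sim\tau^{m-|\alpha|}$), while the tail is $O(\tau^{-\infty})$. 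The $\alpha=\beta=0$ term yields $p_m(x_0,-\tau\xi)u(y)$, which after inverse Fourier transform reproduces $p_m(x_0,-\tau\xi)u_\tau(x)$; all other terms carry an additional factor of $\tau^{-1/2}$ times a Schwartz profile in $y$. This yields the first expansion with the error itself being a family of the same $u_\tau$--type shape but with prefactor $\tau^{m-1/2}$.

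For the distributional statement, I would use the pairing identity
\[
\langle (P\g)^\tau,u\rangle=\langle P\g,u_\tau\rangle=\langle\g,{}^tP u_\tau\rangle,
\]
which is valid because $P$ is properly supported, so ${}^tP u_\tau$ is a compactly supported smooth function. The transpose ${}^tP\in\Psi^m$ has principal symbol $p_m(x,-\xi)$, and applying the pointwise result to ${}^tP$ yields ${}^tP u_\tau=p_m(x_0,\tau\xi)u_\tau+r_\tau$ where $r_\tau$ is a $u_\tau$--type rescaled family with prefactor $\tau^{m-1/2}$. Writing $r_\tau=(v)_\tau$ for an appropriate test--function profile $v$ and using the hypothesis $O_{(x_0,\xi)}(\g)\leq N$ gives $\langle\g,r_\tau\rangle=O(\tau^{N+m-1/2})$, while the leading term yields $p_m(x_0,\tau\xi)\langle\g^\tau,u\rangle$. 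Together these prove the pairing formula, and passing to equivalence classes in $S^{N+m}/S^{N+m-1}$ gives the symbol identity $\sigma^{N+m}_{(x_0,\xi)}(P\g)=p_m(x_0,\tau\xi)\sigma^N_{(x_0,\xi)}(\g)$.

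The main obstacle is bookkeeping the functional--analytic sense of the remainder $O(\tau^{m-1/2})$: it must be a family of the same $u_\tau$--type scaling (i.e.\ $\tau^{n/2}$ times an oscillation times a Schwartz function on the $y=\sqrt\tau(x-x_0)$ scale, with seminorms bounded by $\tau^{m-1/2}$), so that pairing against an arbitrary distribution of Weinstein order $\leq N$ gains exactly $\tau^{N+m-1/2}$ and no worse. This forces one to verify that the Taylor remainders in the symbol expansion integrate uniformly against $\hat u$ through a weight of the form $(1+|y|)^k(1+|\zeta|)^k$, which follows from the $S^m$ bounds restricted to the conic region $|\eta|\sim\tau$ but requires careful tracking of all the factors of $\tau^{1/2}$ that appear from the two simultaneous rescalings.
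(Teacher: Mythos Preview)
Your proposal is correct and follows essentially the same approach as the paper: both compute the action of a PsiDO on $u_\tau$ by changing variables to center the frequency at $-\tau\xi$ (the paper's substitution $\tilde\eta=\eta/\sqrt\tau+\sqrt\tau\,\xi$ is exactly your $\eta=-\tau\xi+\sqrt\tau\,\zeta$), Taylor-expanding the symbol around $(x_0,-\tau\xi)$ to extract the principal term, and then handling the distributional pairing via the adjoint/transpose. The only cosmetic difference is the order of presentation---the paper proves the distributional statement first using $P^*$ and then the pointwise one, whereas you do the pointwise one first and deduce the pairing via ${}^tP$---and you compute $\hat u_\tau$ explicitly while the paper keeps the double integral, but the substance is identical.
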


We also need to know how a diffeomorphism transforms the symbol. First, using the proof of \cite[Proposition 1.4.1]{weinstein1976}, we have
\begin{prop}\label{p: pullback of distribution symbol}
Let $\theta: \RR^n \to \RR^n$ be a diffeomorphism such that $\theta(x_0) = y_0$.
Then $O_{y_0,\phi}(\g) \leq N$ implies
\[
(\g \circ \theta)^\tau_{\phi \circ \theta} - f^\tau_\phi \circ d_{x_0}\theta = O(\tau^{N-1/2}).
\]
where $d_{x_0}\theta : \RR^n \to \RR^n$ is the derivative of $\theta$ at $x_0$.
\end{prop}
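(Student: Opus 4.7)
My plan is to expand both sides of the claimed identity by pairing with an arbitrary test function $u \in \mathcal{D}(\RR^n)$, realize the difference as the action of $\g$ on an explicit family of functions parametrized by $\tau$, and extract a factor of $\tau^{-1/2}$ by Taylor-expanding $\theta^{-1}$ to first order at $y_0$. The order hypothesis $O_{y_0,\xi}(\g)\leq N$ then supplies the remaining $\tau^N$.

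Concretely, the first step is the change of variable $y=\theta(x)$ in the definition of $(\g\circ\theta)^\tau_{\phi\circ\theta}$ paired with $u$, which gives
\[
\bigl\langle (\g\circ\theta)^\tau_{\phi\circ\theta}, u\bigr\rangle
= \bigl\langle \g,\ \tau^{n/2} e^{-i\tau\phi(y)}\, |J_{\theta^{-1}}(y)|\, u\bigl(\sqrt{\tau}(\theta^{-1}(y)-x_0)\bigr)\bigr\rangle,
\]
while the linear substitution gives
\[
\bigl\langle \g^\tau_\phi\circ d_{x_0}\theta, u\bigr\rangle
= \bigl\langle \g,\ \tau^{n/2} e^{-i\tau\phi(y)}\, |\det d_{x_0}\theta^{-1}|\, u\bigl(d_{x_0}\theta^{-1}\,\sqrt{\tau}(y-y_0)\bigr)\bigr\rangle.
\]
Writing $d := d_{x_0}\theta$, introducing the rescaled coordinate $z = \sqrt{\tau}(y-y_0)$, and Taylor expanding $\theta^{-1}(y)-x_0 = d^{-1}(y-y_0)+R(y-y_0)$ with $R(w)=O(|w|^2)$ and $|J_{\theta^{-1}}(y)| = |\det d^{-1}| + O(|y-y_0|)$, the bracket in the difference becomes
\[
B(y,\tau) := |J_{\theta^{-1}}(y)|\, u(d^{-1}z + \sqrt{\tau}R(y-y_0))
- |\det d^{-1}|\, u(d^{-1}z) \;=\; O(|z|^2/\sqrt{\tau}),
\]
since $\sqrt{\tau}R(z/\sqrt{\tau}) = O(|z|^2/\sqrt{\tau})$ and the Jacobian error is $O(|z|/\sqrt{\tau})$, all bounds being uniform as $u$ ranges over a bounded subset of $\mathcal{D}$.

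Finally, I would set $\tilde u_\tau(z) := \sqrt{\tau}\, B(y_0 + z/\sqrt{\tau},\tau)$, so that the difference equals $\tau^{-1/2}\langle \g^\tau_\phi, \tilde u_\tau\rangle$ (after inverting the rescaling that turns $\tilde u_\tau(\sqrt\tau(y-y_0))$ into the function inside the $\g$ pairing). The main technical obstacle is verifying that $\{\tilde u_\tau\}_{\tau\geq 1}$ is a \emph{bounded} family in $\mathcal{D}(\RR^n)$: the supports of both terms in $B$ must stay inside a common compact set in $z$ (which follows because both $d^{-1}z$ and $d^{-1}z+O(|z|^2/\sqrt{\tau})$ are forced into $\operatorname{supp} u$ for large $\tau$), and derivatives in $z$ of $\tilde u_\tau$ must remain uniformly bounded. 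The latter is the delicate point: each $\partial_z$ produces a factor $d(\theta^{-1})_{y_0+z/\sqrt{\tau}}$ (via chain rule the $1/\sqrt{\tau}$ cancels the $\sqrt{\tau}$), so the derivatives converge to those of $|\det d^{-1}|\,u\circ d^{-1}$ plus lower-order corrections of size $O(\tau^{-1/2})$; carefully tracking Fa\`a di Bruno for higher derivatives gives uniform bounds. With this in hand, the hypothesis $\g^\tau_\phi = O(\tau^N)$ in $\mathcal{D}'$ yields $\langle \g^\tau_\phi,\tilde u_\tau\rangle = O(\tau^N)$ and hence the difference is $O(\tau^{N-1/2})$, as required. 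This mirrors the strategy of Weinstein's original Proposition 1.4.1, with the only new bookkeeping being the quadratic remainder from $\theta^{-1}$ that supplies the gain of $\tau^{-1/2}$.
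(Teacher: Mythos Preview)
Your proposal is correct and follows essentially the same approach as the paper: both arguments pair the difference with a test function, pass to the $\g$-side via the change of variable $y=\theta(x)$, rescale with $z=\sqrt{\tau}(y-y_0)$, Taylor-expand $\theta^{-1}$ at $y_0$ to extract the $\tau^{-1/2}$, and then defer the verification that the resulting family of test functions is bounded in $\mathcal D$ to Weinstein's Proposition~1.4.1. Your $\tilde u_\tau$ is exactly $\sqrt{\tau}$ times the paper's $v^\tau$, and your discussion of the support and derivative bounds is in fact more explicit than what the paper writes down.
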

It will be useful to write out each term in the above proposition. Explicitly, we have
\beq \label{e: g of theta formula}
\tau^{-n/2}\langle (\g \circ \theta)^\tau_{\phi \circ \theta}, u \rangle
= \langle \g \circ \theta, e^{i \tau\phi(\theta(x))}u(\sqrt \tau (x-x_0)) \rangle
=  \langle \g , J_{\theta^{-1}}(y)e^{i \tau\phi(y)}u(\sqrt \tau (\theta^{-1}(y)-x_0)) \rangle,
\eeq
where $J_{h}$ denotes the Jacobian of $h$.
Then modulo $O(\tau^{N-1/2})$, this is equal to
\begin{multline}\label{e: g of dtheta formula}
\tau^{-n/2}\langle \g ^\tau_\phi \circ d_{x_0}\theta, u \rangle
= \tau^{-n/2}\langle \g ^\tau_\phi , J_{\theta^{-1}}(y_0)u((d_{x_0}\theta)^{-1}y) \rangle \\
= \langle \g  , e^{-i\tau \phi(y)}J_{\theta^{-1}}(y_0)u(\sqrt \tau(d_{x_0}\theta)^{-1}(y-y_0)) \rangle.
\end{multline}

For the principal symbol, observe that
\[
\phi \circ \theta(x_0) = 0 \qquad d_{x_0}(\phi(\theta)) = (\theta'_x(x_0))^t\phi'(y_0) = (\theta'_x(x_0))^t\eta_0 = \theta^* \eta_0.
\]
Thus, $\phi\circ \theta$ is a phase function for $(x_0, \xi_0)$ where $\xi_0 = \theta^*\eta_0$. Notice, $\theta$ induces a map on $\mathcal D'(\RR^n)$ by $h \mapsto h \circ d_{x_0}\theta$ which preserves bounded sets. It also induces a map on the phase functions via $[\phi] \mapsto [\phi \circ \theta]$ that induces a map on principal symbols.
\begin{cor}
With the setup as above, we have
\[
\sigma_{x_0, \theta^*\eta_0} (\g \circ \theta)
= \sigma_{y_0,\eta_0 }(\theta)[\sigma_{y_0,\eta_0}(\g)].
\]
\end{cor}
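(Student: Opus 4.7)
The plan is to deduce the corollary directly from Proposition \ref{p: pullback of distribution symbol} by unpacking the definition of the Weinstein symbol and interpreting the symbolic action induced by the diffeomorphism $\theta$. Essentially the proposition already does all the analytic work; the corollary is just its reformulation in terms of equivalence classes of families of distributions.

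First, I would fix a phase function $\phi$ with $\phi(y_0)=0$ and $d\phi(y_0)=\eta_0$. Then $\phi\circ\theta(x_0)=0$ and $d_{x_0}(\phi\circ\theta)=(\theta'(x_0))^t\eta_0=\theta^*\eta_0$, so $\phi\circ\theta$ is an admissible phase at $(x_0,\theta^*\eta_0)$. By Definition \ref{d: order of distribution}, the symbol $\sigma^N_{x_0,\theta^*\eta_0}(\g\circ\theta)$ is represented by the family $\tau\mapsto(\g\circ\theta)^\tau_{\phi\circ\theta}$ in $S^N(\mathcal D'(\RR^n))/S^{N-1}(\mathcal D'(\RR^n))$. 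Proposition \ref{p: pullback of distribution symbol} then gives
\[
(\g\circ\theta)^\tau_{\phi\circ\theta}-\g^\tau_\phi\circ d_{x_0}\theta=O(\tau^{N-1/2}),
\]
so the same equivalence class is represented by $\tau\mapsto \g^\tau_\phi\circ d_{x_0}\theta$.

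Next, I would observe that precomposition by the linear isomorphism $d_{x_0}\theta$ defines a continuous linear map $h\mapsto h\circ d_{x_0}\theta$ on $\mathcal D'(\RR^n)$ which preserves the bounded sets used in Definition \ref{d: order of distribution}; hence it descends to a well-defined map on $S^N(\mathcal D'(\RR^n))/S^{N-1}(\mathcal D'(\RR^n))$. By the discussion immediately preceding the corollary, this is precisely the map induced by $\theta$ on principal symbols via the pullback $[\phi]\mapsto[\phi\circ\theta]$ of phase functions, i.e., $\sigma_{y_0,\eta_0}(\theta)[\,\cdot\,]$. Applying this map to $\sigma^N_{y_0,\eta_0}(\g)$, which is the class of $\tau\mapsto \g^\tau_\phi$, yields the class of $\tau\mapsto \g^\tau_\phi\circ d_{x_0}\theta$, which we have just identified with $\sigma^N_{x_0,\theta^*\eta_0}(\g\circ\theta)$. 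This establishes the claimed identity.

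There is no serious obstacle: all the analytic content sits in Proposition \ref{p: pullback of distribution symbol}, whose proof (which we owe to the appendix) uses Taylor expansion of $\theta$ around $x_0$ with the $\sqrt\tau$-rescaling to control the remainder. The only mild subtlety worth noting is bookkeeping the filtration: the error $O(\tau^{N-1/2})$ from the proposition is absorbed into the lower-order part of the quotient $S^N/S^{N-1}$ used to define the symbol, which is why passing to equivalence classes kills the remainder term and yields a clean product formula.
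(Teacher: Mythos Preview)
Your proposal is correct and follows exactly the paper's intended approach: the corollary is stated without proof as an immediate consequence of Proposition~\ref{p: pullback of distribution symbol} together with the observation (made just before the corollary) that $\theta$ induces the map $h\mapsto h\circ d_{x_0}\theta$ on $\mathcal D'(\RR^n)$ and hence on principal symbols. Your unpacking of the equivalence classes and the role of the $O(\tau^{N-1/2})$ remainder is precisely what the paper leaves implicit.
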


\subsection{Principal symbol of an FIO applied to a distribution}
Let us consider a Fourier integral operator $A \in I^\mu(Y\times X, \Lambda_\chi)$ where $X$ and $Y$ are two manifolds and the associated Lagrangian $\Lambda_\chi$ is a canonical graph of a symplectomorphism $\chi: T^*X \to T^*Y$. In fact, the only  case we need is for $X = \mathbb R^n$ and $Y = \mathbb R^n$, so let us assume this to simplify the notation. Our proofs apply to operators acting on vector bundles as well, and with H\"{o}rmander's notation \cite[Chapter 25]{Hormander_1}, this includes operators in $I^\mu(Y\times X, \Lambda_\chi;\text{Hom}(E,F))$ for vector bundles $E$ and $F$. We can write
\[
\Lambda = \Lambda_\chi = \{ (\chi(x,\xi)), (x, \xi) \} \subset T^*Y \times T^*X.
\]
The order of $A$ is $\mu$ and can be written in the form
\[
A = \int e^{i(S(y,\xi) - x \cdot \xi)} a(y,\xi) \ d\xi
\]
and $a \in S^\mu$.
There is the associated set
\[
C_\Phi = \{ (y,x,\xi): d_\xi \Phi = 0 \}
\]
and a diffeomorphism $T_\Phi: C_\Phi \to \Lambda$. Via a projection, we can view $a$ as a symbol over $C_\phi$ and hence $\Lambda$ via the diffeomorphism $T_\Phi$.
We can write explicitly
\[
\Lambda = \{ y, S'_y, S'_\xi,\xi \}
\]
as the graph of the canonical transformation
\[
\chi: (S'_\xi,\xi) \mapsto  (y, S'_y)
\]
Now, for fixed $\xi_0$, the map \[y \mapsto S'_\xi(y,\xi_0) :=T(y)= x(y)\] is a local diffeomorphism by the assumption we have a canonical graph. Denote the Jacobian determinant $J_T(y) := |\p T/\p y|.$
In the proof below, we will be using the adjoint $A^t$, but the adjoint is also associated to a canonical graph and may be put in the form above, which we shall use.

We aim to prove the following proposition which lets us compute a simple form for the Weinstein principal symbol of an FIO associated to a canonical graph applied to a distribution.

\begin{prop}\label{prop: symbol of FIO applied to distribution}
Let $(x_0, \xi_0) = \chi(y_0,\eta_0)$ and $\g \in \mathcal D'(Y)$. Let $A \in \mathcal I^\mu(X \times Y; \Lambda_\chi)$ so that the distributional adjoint $A^t \in I^\mu(Y \times X; \Lambda_{\chi^{-1}}) $ has a representation of the form
$A^t = \int e^{i(S(y,\xi) - x \cdot \xi)} a^t(y,\xi) \ d\xi$ where $a^t$ has a polyhomogeneous expansion with principal term denoted $a^t_\mu$, homogeneous of degree $\mu$ in the $\xi$ variable. Then
$y_0 = T^{-1}(x_0)$ and
\[
\sigma_{x_0,\xi_0}(A\g)
= a^t_\mu(y_0,-\tau \xi_0)J_{T^{-1}}(x_0) \sigma_{x_0,\xi_0}(\g\circ T^{-1}).
\]

\end{prop}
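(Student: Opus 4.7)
My plan is to transport $A$ onto the test function by duality and then expand $A^tu^\tau$ asymptotically, in the spirit of Lemma~\ref{l:PSIDO applied to u_tau} but replacing the $\Psi$DO action by the oscillatory-integral representation of $A^t$. Since $\langle(A\g)^\tau,u\rangle=\langle\g,A^tu^\tau\rangle$, the task reduces to analyzing
\[
A^tu^\tau(y)=\int e^{i(S(y,\xi)-x_0\cdot\xi)}\,a^t(y,\xi)\,\widehat{u^\tau}(\xi)\,d\xi,
\]
and a direct computation gives $\widehat{u^\tau}(\xi)=e^{-ix_0\cdot\xi}\hat u((\xi+\tau\xi_0)/\sqrt\tau)$, which is Schwartz-concentrated near $\xi=-\tau\xi_0$. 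I would substitute $\xi=-\tau\xi_0+\sqrt\tau\eta$, invoke the polyhomogeneous expansion $a^t(y,\xi)=\tau^\mu a^t_\mu(y,-\xi_0)+O(\tau^{\mu-1/2})$, and Taylor-expand $\tau\,S(y,-\xi_0+\eta/\sqrt\tau)$ in powers of $\eta/\sqrt\tau$.

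The algebraic heart of the calculation is Euler's identity $\xi\cdot\nabla_\xi S(y,\xi)=S(y,\xi)$, which at the stationary value $\xi=-\xi_0$ reads $S(y,-\xi_0)=-\xi_0\cdot T(y)$ with $T$ as in the statement. This collapses the $O(\tau)$ part of the total phase to the linear expression $-\tau\xi_0\cdot(T(y)-x_0)$, which is precisely the oscillation carried by $u^\tau\circ T$. Performing the Fourier inversion in the remaining $\eta$-variable then identifies
\[
A^tu^\tau(y)=(2\pi)^n\,a^t_\mu(y,-\tau\xi_0)\,u^\tau(T(y)) \quad\text{modulo terms of order } \tau^{\mu+n/2-1/2}.
\]
Pairing against $\g$, changing variables $x=T(y)$ with Jacobian $J_{T^{-1}}$, and freezing the slowly-varying factors $a^t_\mu(\,\cdot\,,-\tau\xi_0)$ and $J_{T^{-1}}$ at $y_0=T^{-1}(x_0)$ and $x_0$ (which costs only lower-order error by the $1/\sqrt\tau$-concentration of $u^\tau$) yields
\[
\langle(A\g)^\tau,u\rangle=(2\pi)^n\,a^t_\mu(y_0,-\tau\xi_0)\,J_{T^{-1}}(x_0)\,\langle(\g\circ T^{-1})^\tau,u\rangle\pmod{S^{\mu+N-1/2}},
\]
and passing to the quotient in $S^{\mu+N}/S^{\mu+N-1}$ gives the stated Weinstein symbol identity.

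The main obstacle will be the bookkeeping of the $O(1)$ quadratic remainder $\tfrac12\eta^T(\nabla^2_\xi S)(y,-\xi_0)\eta$ in the Taylor expansion of the phase, which a priori competes with the leading behavior. The consequence $(\nabla^2_\xi S)(y,-\xi_0)\cdot\xi_0=0$, obtained by differentiating Euler's identity in $\xi$, confines this quadratic form to directions transverse to $\xi_0$; combined with the equivalence-class structure of $S^N/S^{N-1}$ from Definition~\ref{d: order of distribution} (and the analogous lower-order handling already used in Lemma~\ref{l:PSIDO applied to u_tau} and Proposition~\ref{p: pullback of distribution symbol}), it can be absorbed into the remainder, along with the subleading polyhomogeneous terms of $a^t$ and the Taylor errors in the amplitude and Jacobian. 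This step is precisely where the pseudodifferential principal-symbol calculus of Lemma~\ref{l:PSIDO applied to u_tau} gets upgraded to the FIO principal-symbol calculus asserted by the proposition.
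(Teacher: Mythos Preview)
Your overall architecture matches the paper's: transpose onto the test side, then analyze $A^tu_\tau$. The difference is in how you extract the leading behaviour. The paper does not carry out the $\eta$-Fourier inversion directly; instead it uses the Euler identity to write $S(y,\xi)=S'_\xi(y,\xi_0)\cdot\xi+q(y,\xi-\xi_0)$ and observes that the map $y\mapsto S'_\xi(y,\xi_0)=T(y)$ is a local diffeomorphism (this is the canonical-graph hypothesis). Composing with $T^{-1}$ turns $A^t$ into a genuine pseudodifferential operator $B(x,D)$ with symbol $e^{iq(y(x),\xi-\xi_0)}a^t(y(x),\xi)$, followed by the pullback by $T$. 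At that point the paper simply invokes Lemma~\ref{l:PSIDO applied to u_tau} for $B$ and Proposition~\ref{p: pullback of distribution symbol} for the diffeomorphism, so the FIO result is assembled from the two earlier building blocks rather than recomputed from scratch. The quadratic factor $e^{iq}$ is handled by a $\tau$-dependent cutoff $\mathcal K(\tau^{1/2}(\xi-\xi_0))$, borrowed from Weinstein's Theorem~3.2.5: on its support $q=O(\tau^{-1})$, so $e^{iq}=1+O(\tau^{-1})$ and the principal symbol of $B$ is just $a^t_\mu(y(x),\xi)$; the complementary piece is shown to be $O(\tau^{-\infty})$.

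Your direct route is close in spirit to the commented-out stationary-phase argument, and it would work, but your stated reason for discarding the quadratic term is not the right one. The identity $(\nabla^2_\xi S)(y,-\xi_0)\xi_0=0$ only says the Hessian annihilates the radial direction; it does \emph{not} make $\tfrac12\eta^T H(y)\eta$ small, because $\eta$ is the free integration variable and ranges over all of $\RR^n$ against $\hat u(\eta)$. With that factor present, Fourier inversion produces $(\mathcal F^{-1}[e^{i\eta^T H\eta/2}\hat u])(\sqrt\tau(T(y)-x_0))$ rather than $u(\sqrt\tau(T(y)-x_0))$, and the two differ already at leading order when paired with $\g$. What actually makes the quadratic term harmless is a localization in frequency (as the paper does with $\mathcal K$), after which $q$ is genuinely $O(\tau^{-1})$ on the support and drops out of the principal part. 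If you want to keep your direct computation, you should insert such a cutoff before the $\eta$-substitution and argue, as in Weinstein, that the far piece is rapidly decreasing by non-stationary phase; then your Fourier-inversion step goes through cleanly.
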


In this paper, we are interested in $A \in \mathcal I^\mu(X \times Y, \Lambda_\chi)$ with a representation
\[
A = \int e^{i(\phi(x,\eta) - y \cdot \eta)} a(x,\eta) \ d\eta
\]
where $a = a_{(\mu)} + \tilde a$ with $\tilde a \in S^{\mu-1}$ and $a_{(\mu)}$ homogeneous in $\eta$ of degree $\mu$.
Following H\"{o}rmander in \cite[Chapter 25]{Hormander_1}, denoting $s: Y \times X \to X \times Y$ as a function that interchanges the two factors, then $s^*a^t_{prin}$ is the principal symbol of $A^*$ if $a_{prin}$ is the principal symbol of $A$ and $a^t_{prin}$ is the matrix transpose when the vector bundles have been trivialized (in our setting; both vector bundles are $\mathbb{C}^3$ so there is no need to specify a trivialization). As shown in  \cite[Chapter 25]{Hormander_1}, the principal symbol of $A$ is well-defined and determined by a 1/2-density over $\Lambda$ and a Maslov bundle factor that are both determined by $\Lambda$. Hence, $a^t_\mu$ computed in the above proposition and $s^*a^t_{prin}$ restricted to $\Lambda$ can only differ by a factor, denoted $J_\Lambda$, which is completely determined by the Lagrangian $\Lambda$ (since $\Lambda$ is a canonical graph, there is a natural trivialization of $a_{prin}$ described in \cite[Chapter 25]{Hormander_1}). If we denote the polyhomogeneous expansion as
$a \sim \sum_j a_{(\mu-j)}$, then the preceding discussion implies
\[
a^t_\mu|_\Lambda = J_\Lambda s^*a_{(\mu)}^t|_{\Lambda}.
\]
We then get the following important corollary using the definition of the Weinstein symbol.
\begin{cor}\label{cor: symbol of FIO extraction with limits}
With the notation and assumptions in Proposition \ref{prop: symbol of FIO applied to distribution}, and $\g$ a distribution of order $N$, we have
\begin{equation}\label{e: symbol of FIO extraction}
\liminf_{\tau \to \infty}\tau^{-\mu - N}\langle  A \g, u_\tau \rangle
= a^t_{(\mu)}(y_0, \xi_0)J_{\Lambda}(y_0) \tau^{-N}\liminf_{\tau \to \infty}\langle  \g\circ T^{-1}, u_\tau \rangle < \infty
\end{equation}
where $J_{\Lambda}$ is a quantity determined by $\Lambda$.
\end{cor}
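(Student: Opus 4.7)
The proof is essentially a direct translation of Proposition \ref{prop: symbol of FIO applied to distribution} from the language of Weinstein symbol equivalence classes (elements of $S^{\mu+N}/S^{\mu+N-1/2}$) into an asymptotic statement about pairings with a fixed test function $u$. First I would recall that by the definition of $u_\tau$ we have $\langle A\g, u_\tau\rangle = \langle (A\g)^\tau, u\rangle$ and similarly for $\g \circ T^{-1}$. Proposition \ref{prop: symbol of FIO applied to distribution} says in concrete terms that, modulo $S^{\mu+N-1/2}(\mathcal{D}'(\RR^n))$,
\[
(A\g)^\tau_\phi \;\equiv\; a^t_\mu(y_0,-\tau\xi_0)\,J_{T^{-1}}(x_0)\,(\g\circ T^{-1})^\tau_\phi.
\]
Pairing both sides with $u$ and invoking positive homogeneity of degree $\mu$ of $a^t_\mu$ in the cotangent variable, one obtains
\[
\tau^{-\mu-N}\langle A\g, u_\tau\rangle
\;=\; a^t_\mu(y_0,-\xi_0)\,J_{T^{-1}}(x_0)\,\tau^{-N}\langle \g\circ T^{-1}, u_\tau\rangle + O(\tau^{-1/2}),
\]
so that passing to $\liminf_{\tau\to\infty}$ produces an identity of exactly the desired shape, the only task remaining being to collect the prefactor into the invariant form $a^t_{(\mu)}(y_0,\xi_0)\,J_\Lambda(y_0)$.

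The second step is that identification. As recalled in the paragraph just before the corollary, the principal symbol of $A^t$ is intrinsically a section of a half-density (times Maslov) bundle on $\Lambda_{\chi^{-1}}$, while the amplitude $a^t_\mu$ coming from the particular oscillatory-integral representation $A^t=\int e^{i(S(y,\xi)-x\cdot\xi)}a^t(y,\xi)\,d\xi$ is only a representative depending on the choice of phase function. The relationship $a^t_\mu|_\Lambda = J_\Lambda\, s^* a^t_{(\mu)}|_\Lambda$ stated in the text (where $s$ is the factor swap and $J_\Lambda$ is the change-of-density factor determined by $\Lambda$ being a canonical graph) is exactly what is needed. Absorbing the Jacobian $J_{T^{-1}}(x_0)$ into this factor and renaming the result $J_\Lambda(y_0)$ gives the displayed formula \eqref{e: symbol of FIO extraction}.

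Finally, for the finiteness assertion, I would note that $T$ is a local diffeomorphism near $y_0$, so $\g\circ T^{-1}$ has the same Weinstein order $N$ at $(x_0,\xi_0)$ as $\g$ has at $(y_0,\eta_0)$, by Proposition \ref{p: pullback of distribution symbol}. Consequently the pairing $\langle \g\circ T^{-1}, u_\tau\rangle$ is $O(\tau^N)$, its $\tau^{-N}$-normalized $\liminf$ is finite, and therefore the left-hand side is finite as well.

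The main obstacle, and the only real content, is step two: establishing the invariance claim that the ratio between the phase-dependent amplitude $a^t_\mu$ and the phase-independent principal symbol $a^t_{(\mu)}$ restricted to $\Lambda$ is a factor depending only on $\Lambda$. The rest is bookkeeping once Proposition \ref{prop: symbol of FIO applied to distribution} is in hand; one should also be careful with sign conventions (the $-\tau\xi_0$ in the proposition versus the $\xi_0$ in the corollary), which are absorbed into the definition of $J_\Lambda$ together with any orientation choice made when trivializing the half-density bundle.
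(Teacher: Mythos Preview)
Your proposal is correct and follows the same route as the paper, which simply states that the corollary follows from Proposition \ref{prop: symbol of FIO applied to distribution} together with the definition of the Weinstein symbol and the discussion of $J_\Lambda$ immediately preceding the corollary. You have written out in detail what the paper leaves implicit, including the finiteness argument via Proposition \ref{p: pullback of distribution symbol} and the absorption of sign and Jacobian factors into $J_\Lambda$; nothing more is needed.
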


\section{Recovery of the parameters } \label{s: recovery of the parameters}
In this section, we will list the preliminary results and ingredients needed to prove Theorem \ref{Main_Th_1} and then give the final proof in Section \ref{s: proof of the main theorem}. Since this is an intricate procedure with various pieces, we provide a summary of the proof.
Note that, with a suitable choice of foliation, we can identify $ \overline{\Omega}$ with the disjoint union
\begin{equation*}
\overline{\Omega} = \bigsqcup_{j=0}^{m} \overline{\Omega}_j,\quad \mbox{where }\Gamma_j \subseteq \Sigma_{q_j} = \sig^{-1}\{q_j\},
\end{equation*}
and $\Omega_j = \sig^{-1}\left([q_{j},q_{j+1}]\right)$ are defined via Definition \ref{foliation_defn}. Here we have $0=q_0<q_1<\dots<q_m=q_{m+1}$ and $|\Omega_m| = 0$.

\subsubsection*{Sketch of the proof of Theorem \ref{Main_Th_1}}
We prove the result using a layer striping argument. We split the proof into two main parts.\\

In the \textbf{first part}, we fix a $q \geq 0$ and assume that the elastic parameters are known on $\Omega_{q}^*:= \Upsilon \setminus \overline{\Omega}_{q}$.
 Using the microlocal parametetrix in the previous section, we construct Cauchy data $h$ on $\Omega^*$ such that in time $T>0$, the wave field $U_h$ reaches $\Omega_{q}$ and we can specify the wave front set as well as the mode of the singularity (i.e. whether the singularity is on $P$ or $S$ bicharacteristics) of the solution at $\Sigma_{q}$.
We then add a suitable control (which is semiexplicit) to the Cauchy data, so that when propagated, it cancels the multiple scattering of the wave field after time $T>0$ so that we essentially generate a virtual elastic source at an interior point in $\Omega$, with an initial wavefront set given by a chosen codirection and a desired polarization.

With such specialized waves, we can recover $q$-interior travel times and $q$-interior lens data on $\Sigma_{q}^{+}$.
Using the local boundary rigidity result of Stefanov-Uhlmann-Vasy \cite{SUVRigidity}, we determine the wave-speeds $c_{\PS}$ on a neighbourhood $\mathcal{O}$ of $\Sigma_{q}^+$ in $\overline{\Omega}_{q}$. Here we use the fact that $\Omega$ has a strong convex foliation aligned with the interfaces. This much was already proven in \cite{CHKUElastic} and \cite{SUV2019transmission} so it remains to recover information on the density of mass.
If $\Sigma_q$ contains an interface, we measure the reflected amplitudes of such carefully constructed waves to determine the reflection operator $M_R$ on $\Sigma_{q}^{-}$.
Having the knowledge of $M_R$ on the boundary, we use the result of \cite[Theorem 1.1]{BHKU_1} to recover $\lambda$, $\mu$, $\rho$ and all their derivatives at $\Sigma_{q}^+$. With the specialized controls in the Cauchy data, we generate $P$-waves that are singular along a $P$-ray inside $\bar\Omega_q$ that connects two nearby points on $\Sigma^+_{q}$. We can then use the Weinstein symbols discussed in Section \ref{s: Weinstein calculus} to recover the lower order amplitudes of such waves. These are determined by $a_{\pm,k,\PS}$ restricted to a corresponding Lagrangian that we described earlier. \\

In the\textbf{ second part}, we consider an asymptotic expansion of the amplitude function $a_{\pm,k,\PS}$ on the $P$ bicharacteristics in $T^*\Omega_{q}$, starting and ending at $T^*\Sigma_{q}$.
We observe that along the $P$ bicharacteristics, the terms in the asymptotic expansion of the amplitude satisfy transport equations with the initial data prescribed at $\Sigma_{q}$.
Here, the boundary data consists of the solution and its Neumann derivative at the boundary $\Sigma_{q}$ of $\Omega_q$.

By varying the boundary data, we will recover the local geodesic ray transform of a 2-tensor $A_{\rho}$ given by the double derivatives of the density function $\rho$ in $\mathcal{O}\subset \overline{\Omega}_{q}$.
Using the injectivity result \cite{SUVlocaltensor} on the geodesic ray transform of 2-tensors, we recover the action of the \emph{Saint-Venant operator} on $A_{\rho}$.
In other words, we determine a 4th order elliptic PDE that $\rho$ satisfies in $\mathcal{O}\subset \overline{\Omega}_{q}$.
Using elliptic unique continuation results, we recover the density function $\rho$ in the neighbourhood $\mathcal{O}$ outside of the set $\mathcal{D}$, where the PDE is not elliptic.
Having $c_{\PS}$ and $\rho$, we obtain  the parameters $\lambda$, $\mu$ in $\mathcal{O}$.
We proceed by iteration and finally recover the parameters everywhere in $\Omega$. In the next subsection, we list the key ingredients and previous results that will be essential to the main proof.

\subsection{Summary of preliminary results}

\subsubsection{Local travel time tomography}
A key ingredient in the proof of uniqueness will be the following theorem proved by Stefanov, Uhlmann, and Vasy in \cite{UVLocalRay}.
\begin{theorem}\label{thm: UV local rigidity}
Choose a fixed metric $g_0$ on $\Omega$. Let $n= \text{dim}( \Omega) \geq 3$; let $c,\tilde c >0$ be smooth, and suppose $\p \Omega$ is convex with respect to both $g = c^{-2}g_0$ and $\tilde g = \tilde c^{-2}g_0$ near a fixed $p \in \p \Omega$. If $d_g(p_1,p_2)= d_{\tilde g}(p_1,p_2)$ for $p_1,p_2$ on $\p \Omega$ near $p$, then $c= \tilde c $ in $\Omega$ near $p$.
\end{theorem}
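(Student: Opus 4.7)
The plan is to reduce this nonlinear boundary rigidity statement to the injectivity of a local geodesic X-ray transform, which is the main analytic content one expects from such a result.

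First, I would recover the full Taylor jet of $c-\tilde c$ at $\partial\Omega$ near $p$ from the equality of boundary distance functions. This proceeds by expanding $d_g(p_1,p_2)$ in boundary normal coordinates adapted to $g_0$ and reading off the values and normal derivatives of $c$ from the singular expansion of the distance, following the classical boundary determination argument of Lassas--Sharafutdinov--Uhlmann. After this step I may assume $c-\tilde c$ vanishes to infinite order on $\partial\Omega$ near $p$ and is supported strictly in the interior.

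Second, I would apply a pseudolinearization identity in the spirit of Stefanov--Uhlmann. Interpolating via $g_s = c_s^{-2}g_0$ with $c_s = (1-s)c + s\tilde c$ and using the fundamental theorem of calculus together with a Hamiltonian computation for the bicharacteristic flow, I would derive an identity of the schematic form
\begin{equation*}
d_g(p_1,p_2)-d_{\tilde g}(p_1,p_2) \;=\; \int_0^1 \int_{\gamma_s(p_1,p_2)} (c-\tilde c)\, w_s \, dt_s\, ds,
\end{equation*}
where $\gamma_s$ is the $g_s$-geodesic from $p_1$ to $p_2$ and $w_s$ is a positive smooth weight. Since the left-hand side vanishes for all boundary points near $p$, this expresses the vanishing of a weighted, $s$-averaged X-ray transform of the scalar function $c-\tilde c$ along short geodesics landing at $\partial\Omega$ near $p$.

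Third, I would invoke the local injectivity of the geodesic X-ray transform on functions. Convexity of $\partial\Omega$ at $p$ with respect to both $g$ and $\tilde g$ lets me choose a boundary defining function $x$ whose level sets $\{x=t\}$ are strictly convex near $p$ for all metrics $g_s$ simultaneously. In Melrose's scattering calculus, the normal operator $I^{\ast}I$ localized to short geodesics with endpoints on $\{x\le t\}$ is realized as an elliptic scattering pseudodifferential operator; the dimensional hypothesis $n\geq 3$ enters here in the principal symbol computation. A Fredholm argument then inverts $I^{\ast}I$ on each thin layer $\{0<x<\varepsilon\}$ up to finite dimensional obstructions removed by the infinite-order agreement at $\partial\Omega$, and iterating along the convex foliation (layer stripping) propagates injectivity to a one-sided neighborhood of $p$, so that $c=\tilde c$ there.

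The main obstacle is the scattering-calculus analysis of the normal operator and its layer-stripping continuation: one must establish the correct notion of ellipticity at the boundary face of the scattering cotangent bundle, produce a uniform lower bound on the step size controlled purely by the convexity modulus of the foliation, and handle the weighted, parametrized transform delivered by the pseudolinearization rather than an unweighted one. A secondary subtlety is synchronizing the pseudolinearization across $s\in[0,1]$: the strict convexity of $\partial\Omega$ with respect to both $g$ and $\tilde g$, combined with smoothness of $c,\tilde c$, is needed to guarantee that a single convex foliation works for every intermediate metric $g_s$, so that the X-ray transforms $I_{g_s}$ can be inverted by a uniform argument.
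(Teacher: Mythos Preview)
The paper does not prove this theorem at all: it is quoted verbatim as a known result of Stefanov, Uhlmann, and Vasy, with the citation \cite{UVLocalRay}, and is used purely as a black-box ingredient in the layer-stripping argument. There is therefore no ``paper's own proof'' to compare against.

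That said, your outline is a reasonable high-level summary of the strategy in the cited reference and its companion \cite{SUVRigidity}: boundary determination, a pseudolinearization identity reducing to a weighted X-ray transform, and local inversion of that transform via the scattering calculus of Uhlmann--Vasy in dimension $n\geq 3$. One technical point worth flagging: the standard Stefanov--Uhlmann identity does not require the $s$-interpolation $c_s=(1-s)c+s\tilde c$ you introduce; it directly expresses the difference of scattering relations as an integral of a function of the metric difference along geodesics of a \emph{single} metric (say $g$), which avoids the ``secondary subtlety'' you raise about synchronizing convexity across all $g_s$. Your interpolated version would demand uniform convexity and uniform X-ray invertibility for every intermediate $g_s$, which is not obviously available and is not how the cited proof proceeds.
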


We write down a trivial corollary due to continuity of the distance function.
\begin{cor}\label{cor: lens rigidity from dense set of point}
Consider the same setup as in the above theorem. If $d_g(p_1,p_2)= d_{\tilde g}(p_1,p_2)$ for a dense set of points $p_1,p_2$ on some neighborhood of $p$ in $\p \Omega$, then $c= \tilde c $ in $\Omega$ near $p$.
\end{cor}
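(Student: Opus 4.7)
The plan is to reduce the corollary to Theorem \ref{thm: UV local rigidity} by a straightforward density argument. The key observation is that the boundary distance functions $d_g, d_{\tilde g} : \partial \Omega \times \partial \Omega \to [0,\infty)$ are continuous functions of the pair of endpoints: this follows from the fact that both $g = c^{-2}g_0$ and $\tilde g = \tilde c^{-2}g_0$ are continuous metrics on $\overline \Omega$, so the infimum-of-length definition of the distance yields a continuous function (even Lipschitz) on $\overline\Omega \times \overline \Omega$.

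Given this continuity, I would proceed as follows. Let $U \subset \partial \Omega$ be the neighborhood of $p$ in which the hypothesis holds, and let $D \subset U \times U$ be the dense subset on which $d_g = d_{\tilde g}$. Take any pair $(p_1, p_2) \in U \times U$, and pick a sequence $(p_1^{(n)}, p_2^{(n)}) \in D$ with $(p_1^{(n)}, p_2^{(n)}) \to (p_1, p_2)$. By hypothesis, $d_g(p_1^{(n)}, p_2^{(n)}) = d_{\tilde g}(p_1^{(n)}, p_2^{(n)})$ for every $n$, and by continuity of both distance functions, passing to the limit gives $d_g(p_1, p_2) = d_{\tilde g}(p_1, p_2)$. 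Hence the equality $d_g = d_{\tilde g}$ holds on all of $U \times U$.

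At this point the hypotheses of Theorem \ref{thm: UV local rigidity} are satisfied (the convexity of $\partial \Omega$ with respect to both metrics near $p$ is unchanged, and the distance functions agree for all pairs near $p$, not just a dense set). Applying the theorem directly yields $c = \tilde c$ in $\Omega$ near $p$, which is the desired conclusion.

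There is essentially no obstacle here; the only thing one should be careful about is making sure that continuity of the boundary distance function does not require any regularity beyond what is assumed in Theorem \ref{thm: UV local rigidity}. Since $c, \tilde c > 0$ are smooth, the associated Riemannian metrics are smooth, so their distance functions are continuous on $\overline \Omega \times \overline \Omega$, and the argument goes through without issue.
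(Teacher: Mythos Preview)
Your proof is correct and matches the paper's approach exactly: the paper simply notes that this is a ``trivial corollary due to continuity of the distance function'' without writing out the details, and your density argument is precisely what is intended.
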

We need this since due to the multiple scattering in our setting, we will only be able to recover boundary travel times on a dense set of points and not a full neighborhood. We quote a similar result for the lens relation. Let $L$ denote the lens map.
\begin{cor}(\cite[Corollary]{SUVRigidity}) Let $\Omega, c, \tilde c$ be as above with $c = \tilde c$ on $\p \Omega$ near $p$. Let $L = \tilde L$ near $S_p \p \Omega$. Then $c = \tilde c$ in $\Omega$ near $p$.
\end{cor}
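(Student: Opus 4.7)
The plan is to reduce the corollary to Theorem~\ref{thm: UV local rigidity} (in the form of Corollary~\ref{cor: lens rigidity from dense set of point}) by extracting the boundary distance function $d_g$ for short geodesics near $p$ from the lens map $L$.

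First, since $c=\tilde c$ on $\partial\Omega$ near $p$, the convexity of $\partial\Omega$ with respect to both $g=c^{-2}g_0$ and $\tilde g=\tilde c^{-2}g_0$ holds on a common neighborhood $U\subset\partial\Omega$ of $p$; this is the geometric hypothesis needed to apply Theorem~\ref{thm: UV local rigidity} on $U$. Next, I recall that the lens relation records, for each inward-pointing unit vector $(q,v)$ at a boundary point $q$ near $p$, the exit point and exit direction of the corresponding maximal geodesic, together with the travel time $\ell_g(q,v)$ (the length of the geodesic inside $\Omega$). Thus the hypothesis $L=\tilde L$ on a neighborhood of $S_p\partial\Omega$ gives, in particular, $\ell_g(q,v)=\ell_{\tilde g}(q,v)$ for all such $(q,v)$.

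Now fix any two points $p_1,p_2\in U$ sufficiently close to $p$. By boundary convexity and the standard short-geodesic argument, for $p_1,p_2$ close enough there is a unique $g$-minimizing geodesic from $p_1$ to $p_2$ staying in a small neighborhood of $p$, with some inward unit initial vector $v\in S_{p_1}\Omega$ near $S_p\partial\Omega$; by definition $d_g(p_1,p_2)=\ell_g(p_1,v)$. The identity $L=\tilde L$ implies that the $\tilde g$-geodesic with the same initial data $(p_1,v)$ exits at the same point $p_2$ in time $\ell_{\tilde g}(p_1,v)=\ell_g(p_1,v)$, so it realizes a curve from $p_1$ to $p_2$ of length $d_g(p_1,p_2)$ in $\tilde g$, giving $d_{\tilde g}(p_1,p_2)\le d_g(p_1,p_2)$. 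Running the same argument with the roles of $g$ and $\tilde g$ swapped (using $L=\tilde L$ in the other direction) yields the reverse inequality, hence $d_g(p_1,p_2)=d_{\tilde g}(p_1,p_2)$.

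This identity holds for all pairs in a neighborhood of $p$ in $\partial\Omega$, so Theorem~\ref{thm: UV local rigidity} (or Corollary~\ref{cor: lens rigidity from dense set of point} if one only obtains equality on a dense subset) immediately gives $c=\tilde c$ in $\Omega$ near $p$. The only subtle point, and the one I expect to warrant the most care, is verifying that the initial vectors realizing short minimizing geodesics between nearby boundary points remain inside the open set on which $L=\tilde L$ is assumed; this follows from continuity of the exponential map at the boundary together with the convexity of $\partial\Omega$, but it is what justifies the passage from lens equality near $S_p\partial\Omega$ to boundary distance equality near $p$.
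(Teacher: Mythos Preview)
The paper does not supply its own proof of this corollary; it simply quotes the result from \cite{SUVRigidity}. So there is no in-paper argument to compare against directly.

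Regarding your proposal: the reduction to Theorem~\ref{thm: UV local rigidity} is natural, but it rests on your claim that the lens map $L$ records the travel time $\ell_g(q,v)$. In this paper's own convention (see the definition of the $q$-interior lens relation just above the corollary), $L_{\PS,q}(x,\xi)$ consists only of the exit point and exit covector; the travel time $l_{\PS,q}$ is a separate datum. If that convention governs the corollary, then $L=\tilde L$ does not immediately hand you $\ell_{\tilde g}(p_1,v)=\ell_g(p_1,v)$, and the step where you conclude $d_{\tilde g}(p_1,p_2)\le d_g(p_1,p_2)$ is unjustified as written. One can still recover the short boundary distances from the scattering relation alone near a strictly convex point---for instance, by the first-variation formula the tangential part of the exit covector at $p_2$ equals $\nabla_{p_2} d_g(p_1,p_2)$, and since $L=\tilde L$ forces these exit covectors to agree one integrates from $p_2=p_1$ (where both distances vanish) to obtain $d_g=d_{\tilde g}$---but that is a different and slightly more delicate argument than the one you gave, and it should be spelled out.

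For context, the proof actually given in \cite{SUVRigidity} does not reduce to the boundary-distance theorem at all; it proceeds directly via a pseudo-linearization identity of Stefanov--Uhlmann type together with the local invertibility of the geodesic X-ray transform. Your route, once the travel-time issue is handled, is more elementary given that Theorem~\ref{thm: UV local rigidity} is already available, whereas the original argument is self-contained at the level of the X-ray transform and does not need the boundary-distance result as a separate input.
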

As before, the same corollary holds if we instead assume $L = \tilde L$ in a set that is dense inside some neighborhood of $S_p \p \Omega$.

\subsubsection{Recovery of wave speeds and density of mass across an interface from reflected waves}
In this section, we show that if all material parameters have already been recovered from $\mathcal F$ within a layer up to an interface $\Gamma_i$, then the parameters as well as their normal derivatives can be recovered infinitesimally across $\Gamma_i$; that is, $\p_\nu^J\rho^{(+)}, \p_\nu^J\lambda^{(+)}, \p_\nu ^J\mu^{(+)}$ can be recovered as well (see Corollary \ref{cor: all three parameters at Gamma from mathcal F}). We will do this by analyzing the amplitudes of waves reflected at $\Gamma_i$ from above.
Let us define the incoming and outgoing unit sphere bundle $\partial_{\pm}S^*\Omega_{q}:= \{ (x,\xi)\in \partial_{\pm}T^*\Omega_{q}; |\xi| = 1 \}$.
We start with a key proposition.
\begin{prop}\label{Neumann_tail_lemma}
Let us fix $(x,\xi) \in \partial_{+}S^*\Omega_{q}$. Let $V$ be a distribution, such that $WF(V) = \{(x,s\xi); s \in \R\setminus\{0\}\}$, supported outside $\Omega_{q}$ for some $q>0$. There exist a large enough $T>0$ and a Cauchy data $U_{\infty} \in \C$ supported in $\Omega^*$ such that
\begin{equation}
WF(\mathcal{F}_{T+s}U_{\infty}-\mathcal{F}_s V) = \emptyset, \quad \mbox{for }s\geq 0,\quad \mbox{in }\Omega_{q}.
\end{equation}
Moreover, one can arrange that the singularity of $U_{\infty}$ flows along the $P$ characteristics $\gamma_{P}^{\pm}$ outside $\Sigma_{q}$, i.e. $WF(U_{\infty}) \subset \gamma_{P}^{\pm}$ for time $t$ close enough to $T$.
Similarly one may take $WF(U_{\infty}) \subset \gamma_{S}^{\pm}$ for time $t$ close enough to $T$.
\end{prop}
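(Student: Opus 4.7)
The plan is to apply the elastic scattering control method of \cite{CHKUControl, CHKUElastic} in the known region $\Omega_q^* := \Upsilon \setminus \overline{\Omega}_q$, together with the FIO parametrix of Section 2, to synthesize the interior-source effect of $V$ from Cauchy data supported outside $\overline{\Omega}$. The core idea is to inject a carefully designed initial pulse whose principal wavefront, after transmission through the known layers, arrives at $(x,\xi)$ at time $T$ with a prescribed $P$-polarization (and analogously for $S$), and then use a distant tail to cancel the multiple reflections that this pulse inevitably generates.

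First I would trace the $P$-bicharacteristic $\gamma_P$ backward from $(x,\xi) \in \partial_{+}S^*\Omega_{q}$, taking purely transmitted branches at each interface $\Gamma_i \subset \overline{\Omega}\setminus\Omega_q$. By the extended foliation condition this backward trajectory exits $\overline{\Omega}$ at some covector $(y_0, \eta_0) \in T^*(\Theta\setminus\overline{\Omega})$ after a finite time $T>0$. Using the parametrix \eqref{E_k FIO} in $\Omega_q^*$, construct an initial pulse $U_0 \in \C$ supported in $\Theta \setminus \overline{\Omega}$ with $\WF(U_0) = \{(y_0, s\eta_0): s \neq 0\}$ and polarization along $N = \nabla_x\phi^+_P/|\nabla_x\phi^+_P|$ as in \eqref{asym_exp}. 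Solving the transport equations \eqref{Compatibility condition} along $\gamma_P$ between consecutive interfaces, fix the principal and subprincipal amplitudes of $U_0$ so that the purely transmitted $P$-branch of $F_T U_0$ matches $V$ microlocally at $(x,\xi)$.

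The forward propagation $F_t U_0$ produces, in addition to the desired purely transmitted $P$-branch, reflected and mode-converted branches at each interface in $\overline{\Omega}\setminus\Omega_q$, each of which contributes spurious wavefront set in $\Omega_q$ for $t \geq T$. To eliminate these contributions, I invoke the scattering control construction of \cite{CHKUElastic}: a tail $U_{\text{tail}} \in \C$ supported in $\Upsilon \setminus \overline{\Theta}$ is built by a Neumann-series / fixed-point iteration on the unutilized energy whose $n$-th correction cancels the $n$-th order multiple reflection inside $\Omega_q^*$. The hypothesis $d_S(\partial\Upsilon, \overline{\Theta}) > 2T$ guarantees by finite speed of propagation that the support of $U_{\text{tail}}$ stays disjoint from $\overline{\Omega}$ at $t=0$. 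Setting $U_\infty := U_0 + U_{\text{tail}}$, the only wavefront contribution of $F_{T} U_\infty$ inside $\Omega_q$ is concentrated at $(x, \xi)$ with the same microlocal data as $V$, and by Egorov's theorem for the elastic parametrix in $\Omega_q$, propagation of singularities gives $\WF(F_{T+s}U_\infty - F_s V) = \emptyset$ in $\Omega_q$ for all $s \geq 0$. The polarization claim follows because $U_0$ was constructed $P$-polarized along $\gamma_P$; the $S$-version is identical after replacing $N$ with $\{N_1, N_2\}$ and tracing $\gamma_S$ instead.

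The main technical obstacle is the convergence of the cancellation series in the elastic setting with two speeds and four branches (reflected/transmitted, $P/S$) at each interface crossing. Convergence relies on the extended foliation condition, the strong convexity of $(\lambda, \mu)$, and microlocal boundedness of the reflection/transmission coefficients in the appropriate energy norm; these are precisely the issues resolved in \cite{CHKUElastic}. A secondary subtlety is the preservation of the prescribed polarization at $(x,\xi)$ despite mode conversion along $\gamma_P$: handled by solving \eqref{Compatibility condition} along each smooth segment and using the transmission coefficients from the FIO parametrix at each interface. This proposition is thus essentially a microlocal, polarization-tracking refinement of the scattering control result of \cite{CHKUElastic}.
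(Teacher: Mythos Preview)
Your approach is essentially the same as the paper's: trace back a purely transmitted $\PS$-bicharacteristic from $(x,\xi)$ to the exterior, place an initial pulse $U_0$ there, then append a scattering-control tail from \cite{CHKUElastic} to kill the multiply reflected branches. Two points, however, are handled explicitly in the paper but are glossed over in your proposal.

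First, the tail construction of \cite[Appendix C]{CHKUElastic} does not apply directly to every $(x,\xi)\in\partial_+S^*\Omega_q$: it requires that all \emph{bad bicharacteristics} through $(x,\xi)$ be $(+)$-escapable, i.e.\ $(x,\xi)$ must lie in the set $\mathcal S$ of \cite[Definition C.1]{CHKUElastic}. The paper first proves the proposition on $\mathcal S$ and then removes this restriction by the density argument of \cite[Lemma 5.9]{CHKUElastic}. Your proposal asserts the tail exists for arbitrary $(x,\xi)$ without this two-step structure; as written, that is a gap.

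Second, you invoke \cite{CHKUElastic} for the tail but that paper assumed trivial density, so simply saying ``these are precisely the issues resolved in \cite{CHKUElastic}'' is not enough here. The paper's proof isolates the one ingredient that must be checked: the construction hinges on ellipticity of the transmission operator $M_T$ away from glancing, and this ellipticity is independent of $\rho$, so the argument carries over verbatim. Your description of the obstacle as ``convergence of the cancellation series'' via ``microlocal boundedness of the reflection/transmission coefficients in the appropriate energy norm'' mislocates the mechanism; what is actually used is invertibility of $M_T$, not norm smallness of anything.
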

\begin{proof}
	We will give a brief sketch of the proof, since it is similar to \cite[Proposition 5.3]{CHKUElastic}.
	First we prove the result for $(x,\xi) \in \mathcal{S} \subset T^*\Omega$, consists of $(x,\xi)$ such that all the \emph{bad bicharacteristics}\footnote{See \cite[definition C.1]{CHKUElastic}.} through $(x,\xi)$ are \emph{(+)-escapable}.
	We consider a $\PS$ purely transmitted bicharacteristic $\gamma_{\PS}$ through $(x,\xi)$ outside $\Omega_{\tau}$ such that $\gamma_{\PS}(T)=(x,\xi)$ and $\gamma_{\PS}(0) \in \partial\Theta$.
	Consider a Cauchy data $U_0$ supported in $\Theta\setminus\overline{\Omega}$ such that $WF(U_0) = \{ (\gamma(0),s\dot{\gamma}(0)) ; s\in \R_+ \}$. Using finite speed of propagation and the fact that the singularity flows along the bicharacteristics, one obtains $WF(\mathcal{F}_TU_0 -V) = \emptyset$.
	As in \cite[Appendix C]{CHKUElastic}, one can construct a tail denoted $K_{tail}$, that together with $U_0$, cancels the multiple scattering and $U_{\infty}:= U_0 + K_{tail}$ is the required Cauchy data. That construction relies on $M_T$ to be elliptic away from the glancing set on both sides of an interface, and this is independent of $\rho$, so the same proof holds.
	Finally, using a density argument (as in \cite[Lemma 5.9]{CHKUElastic}), we remove the restriction $(x,\xi) \in \mathcal{S}$.
\end{proof}

\begin{rem}
	The usefulness of the above proposition is in the layer stripping argument, where we recover the parameters outside $\Omega_{\tau}$ and probe the Cauchy data $U_{\infty}$ outside $\Omega$, which generates singularities along the bicharacteristics in $\Omega_{\tau}$. Furthermore, one can generate the flow of singularity along the $P$ and the $S$ bicharacteristics separately.
\end{rem}
\begin{rem}
	The time $T>0$ is determined by the time it takes for all the branches of the scattered waves to travel from $\Omega^*$ to $\Sigma_q$ and back to $\Theta^*$.
	Essentially, $T$ can be estimated by the $S$ distance between $\Omega^*$ and $x \in \Sigma_{q}$.
	For details see \cite[Remark 5.4]{CHKUElastic}.
\end{rem}

We now have an important lemma, taken from \cite[Lemma 5.6]{CHKUElastic} that allows us to recover the reflection operator from reflected waves measured outside $\Omega$.

\begin{lemma} \label{l: reflection psido from mathcal F}
Suppose that $\Sigma_q \subset \Gamma$ and $c_\PS = \tilde c_\PS, \rho = \tilde \rho$
 outside $\overline \Omega_\tau$. Assume $\mathcal{F} = \tilde {\mathcal F}$. Then \[
 M_R \equiv \tilde M_R \text{ on } T^*\Sigma_q^-.
 \]
\end{lemma}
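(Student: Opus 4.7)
The plan is to use Proposition \ref{Neumann_tail_lemma} to generate, for each chosen covector on $\Sigma_q^-$, an essentially monochromatic incident wave at the interface from the outside, and then to compare the reflected response measured exterior to $\Omega$ for the two systems. Since the parameters agree in $\Omega_q^* = \Upsilon \setminus \overline\Omega_q$, the same Cauchy data produces identical incident waves for both systems all the way down to $\Sigma_q^+$.

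More precisely, fix a non-glancing covector $(y_0,\eta_0)\in T^*\Sigma_q^-$ and a mode $P$ or $S$. Choose an incident $P/S$-bicharacteristic arriving at $\Sigma_q$ whose reflected $P$ or $S$ branch passes through $(y_0,\eta_0)$. By Proposition \ref{Neumann_tail_lemma}, there exists Cauchy data $U_\infty\in\mathbf{C}$ supported in $\Omega^*$ so that, at time close to $T$, the solution $F(U_\infty)$ has wavefront set along exactly this incoming bicharacteristic (of the chosen mode) and all multiple reflections that would otherwise contaminate $\Omega_q$ are cancelled by the tail. Since the coefficients agree above $\Sigma_q$, we have $F(U_\infty) = \tilde F(U_\infty)$ in $\Omega_q^*$, and in particular their Cauchy traces on $\Sigma_q^+$ coincide. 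Upon striking the interface these waves generate reflected fields whose Cauchy traces on $\Sigma_q^+$ are, modulo smoothing, $M_R$ (respectively $\tilde M_R$) applied to the common incident trace. The reflected waves then propagate upward through $\Omega_q^*$ under the common and known propagator, and emerge in $\overline\Omega^c$ where they are recorded by $\mathcal{F}$ and $\tilde{\mathcal{F}}$ respectively. The hypothesis $\mathcal{F}=\tilde{\mathcal{F}}$ together with microlocal invertibility of the common upward propagator along non-glancing bicharacteristics (which justifies back-propagation from the exterior to $\Sigma_q^+$) forces the reflected traces at $\Sigma_q^+$ to agree modulo $C^\infty$.

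Since the incident trace is common, this gives $(M_R-\tilde M_R)$ applied to the incident trace is smooth. The next step is to upgrade this to $M_R\equiv\tilde M_R$ on $T^*\Sigma_q^-$. Here the obstacle, and the reason the Weinstein machinery of Section \ref{s: Weinstein calculus} is needed, is that the incident trace is not a plain conormal distribution: it is the boundary value of a wave produced via scattering control, hence of the form $FV$ with $V$ an essentially arbitrary distribution microlocalized at a single covector. Using Corollary \ref{cor: symbol of FIO extraction with limits}, one peels off the principal symbol of $M_R-\tilde M_R$ at $(y_0,\eta_0)$ from the $\tau$-asymptotics of the extracted exterior data, then iteratively peels off successive polyhomogeneous terms by subtracting the recovered principal contribution and repeating. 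Since $(y_0,\eta_0)$ can be varied through a dense non-glancing subset of $T^*\Sigma_q^-$ and both modes can be chosen, and since the symbols of $M_R$ and $\tilde M_R$ are continuous and polyhomogeneous, this yields $M_R\equiv\tilde M_R$ on all of $T^*\Sigma_q^-$. The main technical point — and the novelty relative to the reflection-operator recovery in \cite{CHKUElastic} — is the full-calculus (not just principal symbol) extraction in the presence of a non-conormal probe, which is precisely what Section \ref{s: Weinstein calculus} is designed for.
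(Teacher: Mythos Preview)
Your overall outline---generate a controlled incident wave via Proposition~\ref{Neumann_tail_lemma}, use equality of parameters above $\Sigma_q$ to identify the incident traces, compare the reflected branches coming back out through the common region, and deduce that $(M_R-\tilde M_R)$ applied to the incident trace is smooth---is correct and is indeed what \cite[Lemma~5.6]{CHKUElastic} does. The paper's proof is nothing more than a citation to that lemma, together with the observation that the hypothesis $\rho=\tilde\rho$ outside $\overline\Omega_q$ (combined with $c_{\PS}=\tilde c_{\PS}$) gives $\Op=\tilde\Op$ there, which is all the earlier argument actually used from the trivial-density assumption.

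Where you go astray is in your final paragraph. You assert that the Weinstein calculus of Section~\ref{s: Weinstein calculus} is \emph{needed} to pass from ``$(M_R-\tilde M_R)w\in C^\infty$ for the specific probes $w$'' to ``$M_R\equiv\tilde M_R$'', and you call this a novelty over \cite{CHKUElastic}. This is not so: the paper states flatly that the proof is \emph{identical} to \cite[Lemma~5.6]{CHKUElastic}, so whatever symbol-extraction argument is used there already suffices, and it predates Section~\ref{s: Weinstein calculus}. The point is that the map from the freely chosen $V$ of Proposition~\ref{Neumann_tail_lemma} to the incident boundary trace is an elliptic graph FIO (the transmission operators through the upper interfaces are elliptic off glancing), so one has enough freedom in the probe to run the standard extraction. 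The Weinstein machinery in this paper is deployed for a different task---extracting the \emph{subprincipal} amplitude $(a)_{-1}$ of the interior $P$-wave in the First Part of the proof of Theorem~\ref{Main_Th_1}---not for recovering $M_R$. So your proof would work, but your diagnosis of what is new and why is misplaced.
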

\begin{proof}
 The proof is identical to that of \cite[Lemma 5.6]{CHKUElastic}. In that paper, the trivial density assumption was only needed to have $P = \tilde P$ outside $\overline\Omega_q$, which is also true in our case due to the assumptions given.
\end{proof}

We then have two important corollaries that follows from Theorem 1.5 in \cite{BHKU_1}.
\begin{cor} \label{cor: all three parameters at Gamma from mathcal F}
Suppose that $\Sigma_q \subset \Gamma$ and $c_\PS = \tilde c_\PS, \rho = \tilde \rho$
 outside $\overline \Omega_q$. Assume $\mathcal{F} = \tilde {\mathcal F}$. Then $\p_{\nu}^j c^{(+)}_{\PS} = \p_{\nu}^j\tilde c^{(+)}_{\PS}$ and $\p_{\nu}^j\rho^{(+)} = \p_{\nu}^j\tilde \rho^{(+)}$ on $\Gamma$ for all $j = 0, 1, 2, \dots$
\end{cor}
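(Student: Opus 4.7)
The plan is to combine Lemma \ref{l: reflection psido from mathcal F} with the jet-recovery result \cite[Theorem 1.5]{BHKU_1} as follows. First, by hypothesis all three elastic parameters agree outside $\overline \Omega_q$, and by assumption the exterior measurement operators agree, $\mathcal F = \tilde{\mathcal F}$. Lemma \ref{l: reflection psido from mathcal F} then applies and yields the equality of the full elastic reflection operators at the interface contained in $\Sigma_q$:
\[
M_R \equiv \tilde M_R \quad \text{on } T^*\Sigma_q^-,
\]
i.e.\ the two reflection operators agree as classical pseudodifferential operators of order $0$ modulo smoothing, so that every term in the polyhomogeneous expansion of their principal and subprincipal symbols coincides on $T^*\Sigma_q^-$.

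Next, the content of \cite[Theorem 1.5]{BHKU_1} is precisely that the full symbol of the elastic reflection operator at an interface, together with the knowledge of $(\lambda,\mu,\rho)$ on the $(-)$ side of $\Gamma$, determines the jet of all three parameters $(\lambda,\mu,\rho)$ on the $(+)$ side of $\Gamma$. Since on the $(-)$ side we already have equality of the parameters by assumption (they agree outside $\overline \Omega_q$), applying this theorem term-by-term in the symbol expansion of $M_R = \tilde M_R$ gives
\[
\partial_\nu^j \lambda^{(+)} = \partial_\nu^j \tilde\lambda^{(+)},\qquad \partial_\nu^j \mu^{(+)} = \partial_\nu^j \tilde\mu^{(+)},\qquad \partial_\nu^j \rho^{(+)} = \partial_\nu^j \tilde\rho^{(+)}
\]
on $\Gamma$ for every $j \geq 0$. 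Because $c_P^2 = (\lambda+2\mu)/\rho$ and $c_S^2 = \mu/\rho$ are smooth functions of $(\lambda,\mu,\rho)$ on the strong-convexity domain, the corresponding equalities $\partial_\nu^j c_{\PS}^{(+)} = \partial_\nu^j \tilde c_{\PS}^{(+)}$ follow by the Leibniz/chain rule, which completes the proof. The main thing to verify is the applicability of \cite[Theorem 1.5]{BHKU_1}, i.e.\ that our assumptions on $\Sigma_q$, the transversality of rays to $\Gamma$, and the strong convexity condition place us in the setting of that theorem; once this is granted, the corollary is an immediate transcription.
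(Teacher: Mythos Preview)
Your proof is correct and follows essentially the same approach as the paper: invoke Lemma \ref{l: reflection psido from mathcal F} to obtain $M_R \equiv \tilde M_R$ on $T^*\Sigma_q^-$, then apply \cite[Theorem 1.5]{BHKU_1} to recover the full jet of the parameters on the $(+)$ side. Your additional remark about passing from $(\lambda,\mu,\rho)$ to $(c_P,c_S,\rho)$ via the chain rule is a harmless elaboration that the paper leaves implicit.
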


\begin{proof}
By Lemma \ref{l: reflection psido from mathcal F}, we recover $M_R$ over $\Gamma_-$ based on our assumptions. We then apply \cite[Theorem 1.5]{BHKU_1}.
\end{proof}

\begin{cor} \label{cor: all M_T at Gamma from mathcal F}
Suppose that $\Sigma_q \subset \Gamma$ and $c_\PS = \tilde c_\PS, \rho = \tilde \rho$
 outside $\overline \Omega_q$. Assume $\mathcal{F} = \tilde {\mathcal F}$. Then $M_T \equiv \tilde M_T$ and $T \equiv \tilde T$ at $\Gamma$.
\end{cor}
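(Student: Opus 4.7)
The plan is to reduce this statement to Corollary~\ref{cor: all three parameters at Gamma from mathcal F} together with the algorithmic structure of the elastic parametrix at an interface. First I would observe that under the standing assumptions the full Taylor jets of $(\lambda,\mu,\rho)$ and of $(\tilde\lambda,\tilde\mu,\tilde\rho)$ agree on both sides of $\Gamma$: on the $(-)$ side this is immediate from the hypothesis $c_\PS=\tilde c_\PS,\ \rho=\tilde\rho$ outside $\overline\Omega_q$ (so also $\lambda,\mu$ match there), and on the $(+)$ side Corollary~\ref{cor: all three parameters at Gamma from mathcal F} gives $\partial_\nu^j c^{(+)}_\PS=\partial_\nu^j\tilde c^{(+)}_\PS$ and $\partial_\nu^j\rho^{(+)}=\partial_\nu^j\tilde\rho^{(+)}$ for every $j\ge 0$, which combines to give agreement of $\partial_\nu^j\lambda^{(+)},\partial_\nu^j\mu^{(+)}$ as well.

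Next, I would recall that the refracted canonical map $T$ on $T^*\Gamma$ is determined fiberwise by Snell's law, which is an algebraic equation in the tangential momentum involving only $c_P^{\pm}$ and $c_S^{\pm}$ at $\Gamma$; since these match by the previous paragraph, $T\equiv\tilde T$. For the transmission operator $M_T$, I would invoke the parametrix construction at an interface (see the appendix on the elastic parametrix and the constructions in \cite{CHKUElastic,SUV2019transmission}), where $M_T$ is realized as a classical matrix-valued $\Psi$DO of order $0$ on $T^*\Gamma$ whose polyhomogeneous symbol is obtained by solving, order by order, the transmission conditions $[u]=0$, $[\mathcal N u]=0$ applied to the WKB ansatz. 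At each order, those conditions become a linear algebraic system (with the $0$-th order one being an invertible matrix away from glancing rays, as already used in Proposition~\ref{Neumann_tail_lemma}) whose coefficients depend only on the jets of $(\lambda,\mu,\rho)$ on the two sides of $\Gamma$ and on the phase data, which in turn are determined by $c_\PS^{\pm}$ and their tangential derivatives at $\Gamma$.

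Since all of these inputs coincide for the two sets of parameters, the recursive solution for the symbol of $M_T$ coincides with that of $\tilde M_T$ to all orders; hence $M_T\equiv \tilde M_T$ modulo smoothing. Combined with $T\equiv\tilde T$, this is the desired conclusion. The main (and essentially only) obstacle is a bookkeeping one: checking that nothing other than the jets of $(\lambda,\mu,\rho)$ on the two sides of $\Gamma$ enters the recursion for the full symbol of $M_T$. This is exactly the content of the parametrix construction recalled in the appendix, so the argument is essentially a citation once the jet-matching step via Corollary~\ref{cor: all three parameters at Gamma from mathcal F} is in place.
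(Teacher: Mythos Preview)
Your proposal is correct and matches the paper's intended argument. The paper does not write out an explicit proof for this corollary; it is stated immediately after Corollary~\ref{cor: all three parameters at Gamma from mathcal F} as the second of ``two important corollaries that follows from Theorem 1.5 in \cite{BHKU_1}'', and your reasoning---recover the full jets of $(\lambda,\mu,\rho)$ on both sides of $\Gamma$ via Corollary~\ref{cor: all three parameters at Gamma from mathcal F}, then observe that the Snell map $T$ and the full symbol of the $0$th-order $\Psi$DO $M_T$ are determined algorithmically by those jets through the parametrix construction in Appendix~\ref{a: elastic parametrix} and \cite{CHKUElastic}---is exactly the content the paper leaves implicit.
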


\subsubsection{Recovery of subsurface travel times and lens relations}
Next, we show that one can recover the subsurface lens relations when knowing the parameters outside the domain $\Omega_q$ and the outside measurement operator.

\begin{lemma}\label{l: lens relation recovery}
 Let $(x,\xi) \in \p T^* \Omega_q \cap S^*_+\Omega_q \cap \mathcal S$, and assume the extended convex foliation condition. If $\mathcal F = \tilde {\mathcal F}$ and $\lambda = \tilde \lambda$, $\mu = \tilde \mu$, $\rho = \tilde \rho$ outside $\Omega_q$, then $c_{\PS}$ and $\tilde c_{\PS}$ have identical $q$-interior lens relations w.r.t. $\Sigma_q$ in a neighborhood of $(x,\xi)$ within $T_{\Sigma_q}^*\Omega$.
\end{lemma}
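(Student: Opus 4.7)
The plan is to use the scattering control construction from Proposition \ref{Neumann_tail_lemma} to manufacture a wave whose singularity inside $\Omega_q$ is concentrated on a single bicharacteristic issuing from $(x,\xi)$, and then to read off the exit point and travel time of that bicharacteristic from the exterior data $\mathcal F$.

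First, fix a choice of mode $P$ or $S$ corresponding to $(x,\xi) \in \partial_+ S^*\Omega_q \cap \mathcal S$. By Proposition \ref{Neumann_tail_lemma}, I would construct Cauchy data $U_\infty \in \mathbf C$, supported in $\Upsilon \setminus \overline\Omega$, such that for a suitable $T>0$ the solution $F_{T+s}U_\infty$ agrees microlocally in $\Omega_q$ with the forward propagation of a virtual distribution $V$ whose only wavefront set at $s=0$ is $\{(x,s'\xi): s'\in\RR\setminus\{0\}\}$, and such that the singularity of $U_\infty$ flows along the desired $\PS$-characteristic outside $\Sigma_q$. The tail part of $U_\infty$ cancels multiple scattering so that, for $s>0$, the wavefront set of $F_{T+s}U_\infty$ inside $\Omega_q$ consists exactly of the broken $\PS$-bicharacteristic $\gamma_{\PS}$ starting at $(x,\xi)$ (and its $\RR^+$-orbit). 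Under the extended convex foliation condition, this bicharacteristic strikes $\Sigma_q$ transversely at the point $L_{\PS,q}(x,\xi) = (y,\eta) \in \partial_- S^*\Omega_q$ at travel time $l_{\PS,q}(x,\xi)$, and by the transmission parametrix there is a transmitted branch of the same mode above $\Sigma_q$ whose wavefront set issues from $(y,\eta)$ at time $T + l_{\PS,q}(x,\xi)$.

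Next, I would use that all parameters are known on $\Omega_q^* = \Upsilon \setminus \overline\Omega_q$ to trace this transmitted wavefront from $\Sigma_q^+$ out to $\overline\Omega^c$. Since the elastic operator $\Op$ is known on $\Omega_q^*$, its forward propagator there and the associated canonical relation are explicitly computable; in particular, one can run the propagator backwards, starting from $\mathcal F_{T+s}U_\infty$ for $s > l_{\PS,q}(x,\xi)$, and follow the wavefront set all the way down to $\Sigma_q^+$. This recovers both the point of first arrival on $\Sigma_q$ and its covector, as well as the arrival time, giving $(y,\eta)$ and $l_{\PS,q}(x,\xi)$ directly from $\mathcal F U_\infty$. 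Applying the same construction with the tilde system, using that $\tilde{\Op} = \Op$ on $\Omega_q^*$ and that $\mathcal F = \tilde{\mathcal F}$, I obtain the identical transmitted wavefront and hence $L_{\PS,q}(x,\xi) = \tilde L_{\PS,q}(x,\xi)$ and $l_{\PS,q}(x,\xi) = \tilde l_{\PS,q}(x,\xi)$. Varying $(x,\xi)$ over a neighborhood in $\mathcal S$ and using continuity of the lens map gives agreement of $L_{\PS,q}$ and $\tilde L_{\PS,q}$ on the stated neighborhood.

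The main obstacle I anticipate is Step two: cleanly justifying that the transmitted branch across $\Sigma_q$ can be isolated from reflected branches in $\mathcal F U_\infty$, so that the backward propagation in $\Omega_q^*$ unambiguously identifies $(y,\eta)$. This is handled by choosing $(x,\xi)$ away from the glancing set, ensuring $\Op$ admits the transmission parametrix of \cite{SUV2019transmission}, so that the transmitted mode has a nonzero principal symbol and contributes to the wavefront set above $\Sigma_q$; the reflected branches, even when present, correspond to different bicharacteristics and appear at different exterior arrival points and times, allowing them to be distinguished. A secondary subtlety is that $U_\infty$ is only constructed on the dense subset $\mathcal S$, but this is exactly why the statement is phrased on $\mathcal S \cap \partial T^*\Omega_q$, and the extension to a neighborhood follows from the continuity of $L_{\PS,q}$ as in the density argument of \cite[Lemma 5.9]{CHKUElastic}.
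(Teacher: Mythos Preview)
Your proposal is correct and follows essentially the same approach as the paper, which simply cites \cite[Lemma 5.6]{CHKUElastic} (and \cite[Theorem 10.2]{SUV2019transmission}) and notes that the trivial density assumption there was only used to ensure $\Op = \tilde{\Op}$ outside $\overline\Omega_q$, which holds here by hypothesis. What you have written out is a faithful expansion of that cited argument: use Proposition~\ref{Neumann_tail_lemma} to generate a single-bicharacteristic virtual source at $(x,\xi)$, propagate through $\Omega_q$, and read off the exit covector and travel time from $\mathcal F$ by backward propagation in the known region $\Omega_q^*$.
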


\begin{proof}
The proof is identical to that of \cite[Lemma 5.6]{CHKUElastic}. In that paper, the trivial density assumption was only needed to have $P = \tilde P$ outside $\overline\Omega_q$, which is also true in our case due to the assumptions given. In fact, the lemma is true even without the density assumption by the proof of \cite[Theorem 10.2]{SUV2019transmission}.
\end{proof}

We can combine the above lemma with Theorem \ref{thm: UV local rigidity} to obtain the key corollary. First, let $d^q_{\PS}$ denote the $\PS$-distance function restricted to $\overline\Omega_q \times \overline\Omega_q$.
\begin{cor}\label{cor: local uniqueness}
With the assumptions in the above lemma, $d_{\PS}^q\big|_{\Sigma_q \times \Sigma_q} = \tilde d_{\PS}^q\big|_{\Sigma_q \times \Sigma_q}$ in some neighborhood of $x$, and $c_{\PS} = \tilde c_{\PS}$ in some neighborhood of $x$.
\end{cor}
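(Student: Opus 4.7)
The plan is to first extract equality of the distance functions on $\Sigma_q \times \Sigma_q$ from the lens relation equality, and then invoke the local boundary rigidity result of Stefanov--Uhlmann--Vasy (Theorem~\ref{thm: UV local rigidity} together with Corollary~\ref{cor: lens rigidity from dense set of point}) to conclude equality of the wave speeds.

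For the first step, I would exploit the extended convex foliation condition. Since $\Sigma_q$ is strictly geodesically convex (with respect to both $g_P$ and $g_S$) when viewed from $\Omega_q$, for any pair of points $p_1, p_2 \in \Sigma_q$ sufficiently close to $x$, the $g_\PS$-distance $d^q_\PS(p_1, p_2)$ is realized by a unique short minimizing geodesic $\gamma$ that leaves $\Sigma_q$ into $\Omega_q$ at $p_1$ and returns to $\Sigma_q$ at $p_2$. The initial inward covector of $\gamma$ determines both the exit point $p_2$ and the length $l_{\PS,q}$, which is precisely the data encoded in the $q$-interior lens relation $L_{\PS,q}$ together with the travel time $l_{\PS,q}$. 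Since by Lemma~\ref{l: lens relation recovery} the lens relations and travel times for the two media agree in a neighborhood of $(x,\xi)$ inside $\p_+T^*\Omega_q \cap \mathcal S$, and minimizing geodesics depend continuously on their endpoints by convexity, we conclude that $d^q_\PS(p_1,p_2) = \tilde d^q_\PS(p_1,p_2)$ for all pairs $p_1,p_2 \in \Sigma_q$ in some neighborhood of $x$ for which the shooting covector lies in this escapable set.

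The one technical subtlety is that $\mathcal S$ is only dense (rather than all of the relevant covectors), coming from the scattering control construction in Proposition~\ref{Neumann_tail_lemma}. However, continuity of the distance function in its arguments upgrades equality on a dense set of pairs $(p_1, p_2)$ to equality on a full neighborhood of $(x, x)$ in $\Sigma_q \times \Sigma_q$. Alternatively, one can directly invoke the dense-set version of the lens rigidity corollary stated just after Theorem~\ref{thm: UV local rigidity}.

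For the second step, we now have $d^q_\PS\big|_{\Sigma_q\times\Sigma_q} = \tilde d^q_\PS\big|_{\Sigma_q\times\Sigma_q}$ on a neighborhood of $x$. Viewing $\overline\Omega_q$ as a manifold with smooth boundary $\Sigma_q$ that is strictly convex with respect to both $g_\PS = c_\PS^{-2}dx^2$ near $x$ (which is exactly the content of the extended convex foliation condition at this level $q$), Corollary~\ref{cor: lens rigidity from dense set of point} applied to each of the two metrics $g_\PS$ and $\tilde g_\PS$ yields $c_\PS = \tilde c_\PS$ in a neighborhood of $x$ inside $\Omega_q$. The main obstacle, as throughout this paper, is ensuring that the geometric hypotheses of the SUV local rigidity theorem apply at the current foliation level, which is handled by the extended convex foliation assumption aligning the interfaces with level sets of $\sig$.
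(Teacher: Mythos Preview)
Your proposal is correct and follows essentially the same approach the paper indicates: combine Lemma~\ref{l: lens relation recovery} with the Stefanov--Uhlmann--Vasy local rigidity results (Theorem~\ref{thm: UV local rigidity} and the dense-set/lens corollaries). The paper does not spell out the details, merely stating that the corollary follows from this combination; your argument fills in precisely the expected steps, including the density issue with $\mathcal S$ and the use of strict convexity from the extended foliation condition.
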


\subsubsection{Unique continuation for elliptic operators}
Consider the differential inequality
\begin{equation}\label{e: inequality for UCT}
|\Delta^n u| \leq f(x,u, Du, \dots, D^k u)
\end{equation}
where $f$ is Lipschitz, $k = [3n/2]$.

We have a unique continuation result from \cite[Section 3]{Protter}.
\begin{theorem}[Protter Theorem]
\label{thm: unique continuation}
If $u$ satisfies \eqref{e: inequality for UCT} in a neighborhood $D$ of the origin and $u$ vanishes in any neighborhood of the origin (not necessarily $D$), then $u$ vanishes identically on $D$. In fact, the conclusion holds if
\[
e^{2/|x|^\alpha} u \to 0 \ as \ |x| \to 0
\]
for any positive $\alpha$.
\end{theorem}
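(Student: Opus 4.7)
The plan is to prove this via a weighted $L^2$ Carleman estimate for the polyharmonic operator $\Delta^n$, in the spirit of Aronszajn, Cordes, and Protter's original work. First I would localize: using a smooth cutoff $\chi$ supported in a small ball $B_r \subset D$ centered at the origin and equal to $1$ on $B_{r/2}$, I would work with $v = \chi u$. Commutator terms $[\Delta^n, \chi]u$ are supported in the annulus $B_r \setminus B_{r/2}$, where $u$ is already controlled, and will eventually be thrown into the error on the right-hand side.

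The heart of the argument is a Carleman inequality of the form
\[
\sum_{|\beta|\le k} \int \tau^{2(2n-|\beta|)-1}|x|^{-2\alpha(2n-|\beta|)+N(\tau)} |D^\beta v|^2 \, dx \;\le\; C \int |x|^{N(\tau)} |\Delta^n v|^2 \, dx,
\]
valid for smooth $v$ with compact support in a punctured ball, with weight $\phi(x)=|x|^{-\alpha}$ (equivalently the exponential weight $e^{2\tau|x|^{-\alpha}}$) and parameter $\tau$ large. Such estimates for $\Delta^n$ are proved by conjugating $\Delta^n$ by the weight, splitting the conjugated operator into its symmetric and antisymmetric parts, and computing the commutator; the principal part of the resulting identity is positive definite provided the weight is strongly pseudoconvex with respect to $\Delta^n$. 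Pseudoconvexity of radial weights of the form $|x|^{-\alpha}$ or $\exp(c|x|^{-\alpha})$ for the polyharmonic operator is classical, and the shift in the gradient count explains why $k=[3n/2]$ derivatives of $u$ can be accommodated.

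Next I would plug $v = \chi u$ into the Carleman estimate, use the differential inequality $|\Delta^n u|\le f(x,u,Du,\dots,D^k u)$ and the Lipschitz bound
\[
|f(x,u,Du,\dots,D^k u)| \;\le\; C\!\!\sum_{|\beta|\le k}|D^\beta u|
\]
to dominate $|\Delta^n v|^2$ (on the set $\{\chi=1\}$) by $\sum_{|\beta|\le k}|D^\beta v|^2$ plus commutator errors supported away from the origin. The left-hand side carries the factor $\tau^{2(2n-|\beta|)-1} \ge \tau$ in front of each derivative term of order $|\beta|\le k$, because $2n-k\ge 2n-[3n/2] \ge 1$ for $n\ge 1$, so for $\tau$ large enough the Lipschitz term can be absorbed into the left-hand side. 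What remains on the right is the commutator term, which is uniformly bounded in $\tau$ times a constant depending on $u$ on the annulus.

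Finally I would let $\tau\to\infty$. On the inner ball $B_{r/4}$ the weight $|x|^{-2\alpha(2n-|\beta|)+N(\tau)}$ (or, in the exponential formulation, $e^{2\tau|x|^{-\alpha}}$) dominates anything occurring on the annulus, while the decay hypothesis $e^{2|x|^{-\alpha}}u\to 0$ (and so $e^{2\tau|x|^{-\alpha}}u\to 0$ for each fixed $\tau$) ensures that the left-hand side integrals converge and that the contribution from the singularity at the origin is controlled. Dividing through by the weight and sending $\tau\to\infty$ forces $u\equiv 0$ on $B_{r/4}$. A standard connectedness argument extends this to all of $D$: the set where $u$ vanishes is open by the argument just given (applied at any interior zero), and closed by continuity, hence equals $D$. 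The main obstacle will be the Carleman estimate itself, specifically verifying pseudoconvexity of $\exp(c|x|^{-\alpha})$ for $\Delta^n$ and tracking the $\tau$-powers so that all lower-order terms of order $\le k=[3n/2]$ can be absorbed; the decay hypothesis is calibrated exactly to match the weight used in this estimate.
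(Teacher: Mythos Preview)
The paper does not prove this theorem; it is quoted verbatim from Protter's work and cited as \cite[Section 3]{Protter} without any argument given. Your sketch via a Carleman estimate with singular radial weight $e^{\tau|x|^{-\alpha}}$, absorption of the Lipschitz right-hand side for large $\tau$, and a connectedness argument is exactly the classical route that Protter's paper follows, so you are in agreement with the cited source.

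One small point worth tightening: your parenthetical ``and so $e^{2\tau|x|^{-\alpha}}u\to 0$ for each fixed $\tau$'' does not follow from the decay hypothesis for a \emph{single} $\alpha$; what makes the argument work is that the hypothesis is assumed for \emph{every} $\alpha>0$, so for any weight exponent $\beta$ used in the Carleman estimate you may invoke the hypothesis with some $\alpha>\beta$, and then $|x|^{-\alpha}$ dominates $\tau|x|^{-\beta}$ near the origin for every $\tau$. Similarly, the ``closed by continuity'' step in the connectedness argument should be phrased as: at a boundary point of the interior of the zero set, $u$ vanishes to infinite order (hence satisfies the exponential decay hypothesis there), and then the Carleman argument recentered at that point yields a contradiction.
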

In the main proof, we will eventually recover ``lower order'' amplitudes of elastic waves, and use them to show $\rho - \tilde \rho$ satisfies an elliptic, fourth order PDE and an inequality of the form \eqref{e: inequality for UCT}. We can then use Protter theorem to locally determine $\rho$.

Following the sketch of the proof at the start of Section \ref{s: recovery of the parameters}, we divide proof into two parts: \emph{The first part} concerns previous results on local travel time tomography that is used to recover the subsurface lens relation at each $\Sigma_{\tau}$ from the knowledge of the parameters in $\Omega_{\tau}^*$. We also analyze reflected amplitudes to recover the jumps in the material parameters and their derivatives across $\Sigma_\tau$ if there is an interface there. We then recover the amplitudes of $P$-waves along the bicharacteristic curves in a smooth neighbourhood of $\Sigma_{\tau}.$
\emph{Part 2} deals with the analysis of the amplitudes along the bicharacteristic curves in a smooth neighbourhood of $\Sigma_{\tau}$.
Using the lower order terms along with the principal part of the symbol $p(t,x,\tau,\xi)$, we obtain a transport equation for the amplitude, which in turn helps us to recover the density.

\subsection{\bf Proof of Theorem \ref{Main_Th_1} and absence of gauge freedom}\label{s: proof of the main theorem}

\begin{proof}[Proof of Theorem \ref{Main_Th_1}]
The proof is by contradiction. Suppose $c_P \neq \tilde c_P$ or $c_S \neq \tilde c_S$ or $\rho \neq \tilde \rho$, and let $f= |c_P - \tilde c_P|^2 + |c_S - \tilde c_S|^2 + |\rho - \tilde \rho|^2$. Now consider $S:= \Omega \cap \text{supp} f$, and take $q = \min_S \sig$: 
so $c_{P} = \tilde c_{P}$ and $c_S = \tilde c_S$ and $\rho = \tilde \rho$ above $\Omega_q$, but by compactness there is a point $x \in \Sigma_q \cap S$. The condition that $\sig^{-1}(\tilde q)$ has measure zero rules out the trivial case $q = \tilde q$.

{\bf First part:}
Let us now consider a small neighborhood of $x$, denoted $B_x$, and we consider the $\Sigma_q$-boundary distance function $d^q_{\PS}$ restricted to this neighborhood. Since the interfaces are not dense, and we assume convex foliation, we may choose $B_x$ small enough so that all $P$ and $S$ rays corresponding to rays staying completely inside $B_x$ do not reach an interface; i.e. even the mode converted rays do not reach an interface. This ensures that a $P$-wave that hits $B_x$, transmits a $P$ and $S$ wave, the $P$-wave returns to $\Sigma_q$ first before any other ray.

The proof of \cite[Theorem 10.2]{SUV2019transmission} (\cite{CHKUElastic} has a slightly different proof) in conjunction with Corollary \ref{cor: lens rigidity from dense set of point} or Theorem \ref{thm: UV local rigidity} shows $c_\PS = \tilde c_\PS$ in some neighborhood of $x$ inside $B_x$ which we keep denoting as $B_x$. Now let $(z_1, \zeta_1) \in \p T^*\Omega_q$ with $z_1 \in B_x$ such that the ray $\gamma=\gamma_{P,z_1,\zeta_1}$ starting at $(z_1,\zeta_1)$ at time $t=0$ remains inside $B_x$ until it hits $\Omega_q$ again at some point $(z_2,\zeta_2) = L_q(z_1,\zeta_1)$ at time $t_2$. Let $\Lambda_t$ be the Lagrangian associated to the $P$-bicharacteristic flow of time $t$, but restricted to $B_x$. To describe the Lagrangian $\Lambda_{t}$,
denote $\chi^P_t : T^*\RR^n \to T^*\RR^n$ the $P$-bicharacteristic flow by $t$ units of time. Then $\Lambda_{t} = \{ (\chi^P_{t}(y,\eta), y, \eta); (y,\eta) \in T^*B_x\}$.

Let $V \in \mathcal E'(\Omega)$ with $\WF(V) = \RR_+(z_1, \zeta_1)$. As shown in \cite[Proof of Theorem 10.2]{SUV2019transmission}, we may construct an outgoing $P$ wave $u, \tilde u$ in $\Omega_q$ so that near $z_1$, $u(t_1) \equiv V$ and $\tilde u (t_1) \equiv V$.
As in \cite{SUV2019transmission}, $\gamma$ does not hit an interface for either operator.
We now consider two cases, depending on whether $x$ is on an interface or not.
\medskip

\noindent\textbf{\boldmath Smooth case:} $x \notin \Gamma$.

 By the construction in \cite{SUV2019transmission} which uses Proposition \ref{Neumann_tail_lemma}, $u$ and $\tilde u$ are microlocal $P$-waves inside $B=[T,T+t_2] \times B_x$ and within this set are given by the forward propagator applied to $V$ with wavefront set in $\Sigma_P$:
\begin{equation}\label{e: propag applied to V for u}
u|_B \equiv \int e^{i \phi^+_P(t,x,\eta)} a^{\cdot, l}_{+,k,P}(t,x,\eta) \hat{V}_l(\eta) \ d\eta, 
\end{equation}
and likewise
\begin{equation}\label{e: propag applied to V for tilde u}
\tilde u|_B \equiv \int e^{i \tilde \phi^+_P(t,x,\eta)} \tilde a^{\cdot, l}_{+,k,P}(t,x,\eta) \hat{V}_l(\eta) \ d\eta.
\end{equation}
Since $\phi^+_P, \tilde \phi^+_P$ only depend on the wave speeds and we have recovered them inside $B_x$, these phase functions are equal within $B_x$.
 We would like to conclude that $(a_{+,k,P})_J - (\tilde a_{+,k,P})_J|_{\{(z_1,\zeta_1),(z_2,\zeta_2)\}} = 0$ for each $J = 0, -1, -2, \dots$ in the polyhomogeneous expansion of the symbols.

 Next, as shown in \cite{CHKUElastic}, $u \equiv \tilde u$ on $\Omega_q^c$ since $P = \tilde P$ outside $\Omega_q$ and using propagation of singularities.
 Thus,
\[
\int e^{i \phi^+_P(t,x,\eta)} (a^{\cdot, l}_{+,k,P}(t,x,\eta) - \tilde a^{\cdot, l}_{+,k,P}(t,x,\eta)) \hat{V}_l(\eta) \ d\eta
\in C^\infty
\]
over $\Omega_q^c$. Next, restrict to $t = t^* = t_2 +\epsilon$.
Let us denote the symbol $b(x,\eta) = a_{+,k,P}(T+t^*,x,\eta) - \tilde a_{+,k,P}(T+t^*,x,\eta)$ restricted to $B_x$ in the spatial variable. The leading order terms in the polyhomogeneous expansions of $a$ and $\tilde a$ only depends on the principal symbol of $P, \tilde P$, and are thus equal since we have already recovered the wavespeeds \cite{SUV2019transmission}. Thus,
$b \sim \sum_{J=-1,-2,\dots} b_J$ where $b_{-1} = (a^{\cdot, l}_{+,k,P}(t^*,x,\xi))_{-1} - (\tilde a^{\cdot, l}_{+,k,P}(t^*,x,\xi))_{-1}\in S^{-1}_{hom}$. Our goal is to conclude $b_{-1} = 0$ at $(z_2,\zeta_1)$. Restricted to $\bar \Omega_\tau^c$, we have
\[
BV:= \int e^{i \phi^+_P(t^*,x,\xi)} b(x,\xi) \hat{V}(\xi) \ d\xi
= f \in C^\infty(B_x \cap \bar \Omega^c_q)
\]
and $B$ is a Fourier integral operator associated to a canonical graph inside the class $\mathcal I^0(B_x \times B_x, \Lambda_{t^*})$. Note that $\Lambda_{t^*} = \{ (\chi^P_{t^*}(y,\eta), y, \eta); (y,\eta) \in T^*B_x\},$ and in particular, $(\bar z, \bar \zeta) = \chi^P_{t^*}(z_1,\zeta_1)$.
As shown in Appendix \ref{s: proofs of Weinstein symbol stuff}, $\theta(y):= x \circ \chi_{t^*}^P(y,\zeta_1)$ is a local diffeomorphism from some neighborhood of $z_1$ to a neighborhood of $\bar z$ that we denote by $W$. We can shrink neighborhoods so that $W \subset \bar\Omega^c_\tau.$ Let $N$ denote the order of the distribution $V$ (see definition \ref{d: order of distribution}) and let $u\in \mathcal C^\infty_c(W)$.
Denote $u_\tau(x) = \tau^{n/2} e^{-i\tau(x- \bar z) \cdot \bar \zeta}u(\sqrt \tau(x-\bar z)$.
Then corollary \ref{cor: symbol of FIO extraction with limits} shows that
\[
\liminf_{\tau \to \infty} \tau^{1-N} \langle BV, u_\tau \rangle
= b_{-1}^*(\bar z,  \zeta_2)J_{\theta^{-1}}(z_1)\liminf_{\tau \to \infty} \langle V \circ \theta^{-1}, u_\tau \rangle.
\]
Also, $\lim_{\tau \to \infty} \langle f, u_\tau \rangle = 0$ since $f$ is smooth by \cite[Corollary 2.2.2]{weinstein1976}. We have the freedom to choose $V$ so that this last limit is nonvanishing; we can even use the construction in \cite[Example 2.6]{Dencker-Polarization} so that the Fourier transform of $V$ belongs in a certain $0$'th order symbol class so that $N=0$.
  We conclude $b_{-1}(\bar z,  \zeta_1) = 0$. Repeating this argument for $\epsilon \to 0$ lets us conclude $b_{-1}(z_2, \zeta_1) = 0$ since $J_{\theta^{-1}}$ is nonvanishing. We may repeat this argument by adjusting the weight $\tau^{k-N}$ to show vanishing of the lower order terms of $b$ as well but we do not need it.

This can be done for any such $V$ and downward $(z_1,\zeta_1)$ near $T_x^*\p \Omega_q$ so we conclude by (\ref{prop: symbol of FIO applied to distribution})
\[
 a^{\cdot, l}_{+,k,P}(t_2,z_2,\zeta_1)
=\tilde a^{\cdot, l}_{+,k,P}(t_2,z_2,\zeta_1)
\text{ modulo }S^{-\infty}.
\]

\noindent\textbf{\boldmath Interface case: }$x \in \Gamma$.

 This case differs from the previous case since we want measurements of the amplitude on $\Sigma_+ \supset \Gamma_+$, while our assumptions only allow measurements on $\Gamma_-$. We are essentially treating $\Gamma_+$ (and not $\Gamma_-$ as the boundary for $\Omega_q$). Fortunately, the difference is given by the transmission operator which we can recover via Corollary \ref{cor: all M_T at Gamma from mathcal F}. Note that since we recovered the speeds, $\gamma = \tilde \gamma$.

 As shown in \cite{CHKUElastic}, our inductive assumptions, propogation of singularities and the foliation condition imply $u \equiv \tilde u$ in $\bar \Omega_q^c$ since $P = \tilde P$ in that region. Near $(T,z_1)$, by construction we have $u|_{\Gamma_+} = M_T u|_{\Gamma_-}$ and $\tilde u|_{\Gamma_+} = \tilde M_T \tilde u|_{\Gamma_-}$.
By Corollary \ref{cor: all three parameters at Gamma from mathcal F} and since $u|_{\Gamma_-} \equiv \tilde u|_{\Gamma_-}$, we conclude $u|_{\Gamma_+} \equiv \tilde u|_{\Gamma_+} \equiv V$ near $(T,z_1)$.

By construction in Proposition \ref{Neumann_tail_lemma}, $u$ and $\tilde u$ are microlocal $P$-waves inside $B = [T,T+t_2] \times B_x$ as in the previous case and within this set are given by the forward propagator applied to $V$ with wavefront set in $\Sigma_P$ just as in \eqref{e: propag applied to V for u} and \eqref{e: propag applied to V for tilde u} above. Near $(T+t_2,z_2)$, we have again have $u|_{\Gamma_-} \equiv M_T u_{\Gamma_+}$ and $\tilde u|_{\Gamma_-} \equiv \tilde M_T \tilde u_{\Gamma_+}$. Since $u \equiv \tilde u$ outside $\bar\Omega_q$ and using Corollary \ref{cor: all M_T at Gamma from mathcal F}, we conclude $u|_{\Gamma_+} \equiv \tilde u|_{\Gamma_+}$ near $(T+t_2,z_2)$ as well.
 Denote $\rho_{\Gamma_+}$ as the restriction to $\Gamma$ from below near $z_2$. As shown in \cite[Chapter 5]{DuisFIO}, if $\text{WF}(u)$ over $\Gamma_+$ contains no covectors tangential to $\Gamma$, then $\rho_{\Gamma_+}u$ is also the image of an FIO in $\mathcal{I}^0$ associated to a canonical graph $\Lambda \subset T^*(\Gamma_+ \times \RR_t) \times T^*B_x$. It can be described using $\chi_t^P$ from before. Then $\Lambda = \{ t(y,\eta), -|\eta|_P, d\rho_{\Gamma_+}\chi^P_{t(y,\eta)}(y,\eta), (y, \eta)\}$ where $t(y,\eta)$ is the time the $P$-ray from $(y, \eta)$ hits $\Gamma$ for the first time. It will be convenient to just define $\Phi(y,\eta) =  (t(y,\eta), -|\eta|_P, d\rho_{\Gamma_+}\chi^P_{t(y,\eta)})$. Let us introduce boundary normal coordinates for $\Gamma$ near $z_2$ with local coordinates $(x',x_n)$, where $\Gamma$ is given by $x_n = 0$. Using (\ref{e: propag applied to V for tilde u}) and (\ref{e: propag applied to V for u}), we then have near $(t_2,z_2)$
 \begin{equation}
 \rho_{\Gamma_+}u \equiv \int e^{i \phi^+_P(t,x',\eta)} a^{\cdot, l}_{+,k,P}(t,x',\eta) \hat{V}_l(\eta) \ d\eta
\end{equation}
and likewise
\begin{equation}
\rho_{\Gamma_+}\tilde u \equiv \int e^{i \tilde \phi^+_P(t,x',\eta)} \tilde a^{\cdot, l}_{+,k,P}(t,x',\eta) \hat{V}_l(\eta) \ d\eta.
\end{equation}
Analogous to the previous case, we
define $b(t,x',\eta) = a_{+,k,P}(t,x',\eta)
- \tilde a_{+,k,P}(t,x',\eta)$ restricted to a small neighborhood of $(t_2,z_2)$ within $\RR_t \times \Gamma_+$. As in the previous case, $b_0 = 0$ in the polyhomogeneous expansion of $b$. Restricted to $\bar \Omega_q^c$, we have as before
\[
BV:= \int e^{i \phi^+_P(t,x',\xi)} b(t,x',\xi) \hat{V}(\xi) \ d\xi
= f \in C^\infty(\RR_t \times \Gamma_+ \cap \RR_t \times \bar \Omega^c_q),
\]
where $B$ is a Fourier integral operator associated to the canonical graph $\Phi$ inside the class $\mathcal I^0(\RR_t \times \Gamma_+ \times B_x, \Lambda)$. To apply the argument in the previous case with Proposition \ref{prop: symbol of FIO applied to distribution}, all that is necessary is that $B$ is an FIO associated to a canonical graph so that $(t,x') \circ \Phi(y,\eta)$ is a local diffeomorphism, since we are away from the glancing set. Hence, using the test functions $u_\tau(x) = \tau^{n/2} e^{-i\tau[(x'-z_2)\cdot \zeta_2 - (t-t_2)|\zeta_1|_P] } u (\sqrt \tau (t-t_2),\sqrt \tau (x'-z_2)$, we apply the argument in the previous case to conclude $b_{-1}(t_2,z_2,\zeta_1) = 0$. Likewise, we can iterate to show the vanishing of the lower order terms as well.

  Hence, we are in the same situation as the smooth case above and the remaining argument proceeds as above to conclude $a_{\pm,k,P} = \tilde a_{\pm,k,P}$ on $\Lambda$ (the principal amplitudes may be taken as functions on $\Lambda$ via a diffeomorphsim \cite[Chapter 25]{Hormander_1}) modulo $S^{-\infty}$.

 {\bf Second part:}
Above, we have concluded that $a_{\pm,k,P} = \tilde a_{\pm,k,P}$ modulo $S^{-\infty}$ when restricted to the $P$-bicharacteristic flow Lagrangian. We calculate the amplitude $a_{\pm,k,P}$ on the $P$ bicharacteristic segment $\gamma$.

Recall the parametrization of the bicharacteristic curves as in \eqref{parametrization_bichar}. Let us evaluate the compatibility condition \eqref{Compatibility condition} for $J=0,-1$.
From a straight forward calculation we obtain (see \cite[Section 3]{B_density})
\begin{equation}\label{key_relation_1}
\begin{aligned}
\partial_{\tau,\xi}p(t,x,\tau,\xi)
=& 2\rho\left( -\tau I, \left[ c_S^2\xi_1 I + (c_P^2-c_S^2)(e_1 \circledS \xi)\right], \dots,\left[c_S^2\xi_1 I + (c_P^2-c_S^2)(e_3 \circledS \xi)	\right]\right)\\
\partial_{\xi_j\xi_k}p(t,x,\tau,\xi)
=& 2\rho \left[ \delta_{jk} c_S^2I + (c_P^2-c_S^2)(e_3 \circledS e_k) \right]\\
N \cdot \partial_{t,x}N
=& \frac{1}{2}\partial_{t,x}(N\cdot N) = 0.
\end{aligned}
\end{equation}

Let us recall from \eqref{asym_exp} that $\left(a_{\pm,k,P}^{\cdot,l}\right)_J = \left(h_{\pm,k,P}^{\cdot,l}\right)_J + \left(b_{\pm,k,P}^{l}\right)_J N$, for $J=0,-1,\dots$.
Note that $\left(h_{\pm,k,P}^{\cdot,l}\right)_0 = 0$ and let us denote $\left(b_{\pm,k,P}^{l}\right)_J = b_{J}$ for $J=0,-1$. Using the fact that $\left(a_{\pm,k,\PS}^{\cdot,l}\right)_1 = 0$, the compatibility condition \eqref{Compatibility condition} for $J=0$ reduces to
\begin{equation*}
\begin{aligned}
0 =& N\left[ B_P(b)_0\right]\\
=& 2i\rho \left[ -\tau\p_t + c_P^2(\xi\cdot\nabla_x) + \frac{1}{2}(c_P^2-c_S^2)|\xi|(\nabla_x\cdot N) + ic_P^2 \xi\cdot\nabla_x(\log \rho c_P^2)  \right] (b)_0\\
&-i\rho \left[c_P^2 N(\nabla\otimes \xi)N + c_P^2\frac{d}{ds}(\log c_P)|\xi| - c_S^2(\nabla_x\cdot\xi) - (c_P^2-c_S^2)N(\nabla_x\otimes\xi)N \right] (b)_0.
\end{aligned}
\end{equation*}
The above equation, combined with \eqref{parametrization_bichar}, reduces to a transport equation
\begin{equation*}
\frac{d}{ds}(b)_0 = -\frac{1}{2}\left[ \frac{d}{ds} \log (\rho c_P) + (\nabla_x\cdot N) \right] (b)_0.
\end{equation*}
For $s>T_0$, we solve the transport equation above and get (see \cite[Equation 16]{B_density})
\begin{equation}\label{b_0}
(b)_0(s) = (b)_0(0) \sqrt\frac{(\rho c_P)|_{s=0}}{(\rho c_P)(s)} \exp \left(-\frac{1}{2} \int_{0}^s (\nabla_x\cdot N)(r)\,dr \right), \quad\mbox{and}\quad
(a)_0 = N \otimes (b)_0.
\end{equation}

For $J=-1$, the compatibility condition \eqref{Compatibility condition} reads
\begin{equation*}
N[B_P (a)_{-1} + C_P (a)_0] = 0.
\end{equation*}
Using a similar calculation as for $J=0$ we get
\begin{equation}\label{key}
\frac{d}{ds} (a)_{-1} + \frac{1}{2}\left[ \frac{d}{ds}(\log \rho c_P) + (\nabla_x\cdot N)\right] (a)_{-1} = G
= \frac{1}{2i\rho c_P^2 |\xi|}\left[ NB_{P}(h^{\cdot,l}_{\pm,k,P})_{-1} + N C_P (a)_0 \right].
\end{equation}
Note that one can determine
\begin{equation*}
(h^{\cdot,l}_{\pm,k,P})_{-1} = \frac{-1}{\rho(c_P^2-c_S^2)|\xi|^2}B_P(N\otimes(a)_0),
\end{equation*}
from \eqref{Eqn for a-J}.
Thus, we can solve the above transport equation for $(a)_{-1}$ and get
\begin{equation}\label{a_-1}
g (a)_{-1} = C + \int_{\gamma} g G,
\end{equation}
where $C$ is a constant and
\begin{equation*}
g(s) = \frac{\sqrt{(\rho c_P)(s)}}{(a)_0(T_0)\sqrt{(\rho c_P)(T_0)}}\exp \left( \int_{0}^{s}\nabla_x\cdot N(r)\,dr \right).
\end{equation*}

Now, note that $U$, $M_T$, and $\p_{\nu}^j\lambda$, $\p_{\nu}^j\mu$, $\p_{\nu}^j\rho$, for $j=0,1,\dots$ are known at $\Sigma_{\tau}^\pm$, thus we know $(a)_{-1}|_{s=T_0}$.
Let there be two sets of parameters $(\lambda,\mu,\rho)$ and $(\tilde{\lambda},\tilde{\mu},\tilde{\rho})$ in $\Omega$ as assumed in the statement of Theorem \ref{Main_Th_1}. Let $(\tilde{a})_{-1}$ be the same quantity as $(a)_1$ corresponding to the parameters $(\tilde{\lambda},\tilde{\mu},\tilde{\rho})$.
Therefore, from the above analysis we get $(a)_1(0) = (\tilde{a})_1(0)$.
Since we already have that the wave-speeds $c_{\PS}$ are same in a neighbourhood of $\Sigma_{\tau}$, hence the geodesics are also same near $\Sigma_{\tau}$.
Since the projection of $\zeta$ in $\Omega$ is a geodesic $\gamma$, therefore, from \eqref{a_-1} we obtain
\begin{equation}\label{Ray_transform_1}
\int_{\gamma} N\cdot \left(\frac{A -\tilde{A}}{c_P}\right)N\,ds = 0,
\end{equation}
where $A$ and $\tilde{A}$ are exactly same as the matrices $A_1$, $A_2$ derived in \cite[Equation 21]{B_density}.
Here $A$, $\tilde{A}$ are 2-tensors and using the local inversion result \cite{SUVlocaltensor} we obtain the kernel of the geodesic ray transform \eqref{Ray_transform_1}, given by the \emph{Saint-Venant operator} (see \cite{B_density}).
The \emph{Saint-Venant operator} applied on the 2-tensor $\frac{1}{c_P}(A-\tilde{A})$, results in to a symmetric 4-tensor.

Using the exact same calculations of the symmetric 4-tensor as in \cite[Section 4]{B_density} we finally obtain the following 4-th order PDE
\begin{equation}\label{PDE}
\frac{(c_P^2-c_S^2)(c_P^2-4c_S^2)}{c_P^4 - 5c_P^2c_S^2 +8c_S^4}\Delta^2 \log\left(\frac{\rho}{\tilde{\rho}}\right) - \Delta\left(\nabla_x \log(\rho\tilde\rho)\cdot\nabla_x\log\left(\frac{\rho}{\tilde{\rho}}\right)\right)=0,
\end{equation}
in a neighbourhood of $\Sigma_{q}$ in $\overline{\Omega}_{q}$.
Since we know the derivatives of the parameters on $\Sigma_{q}^+$ by Corollary \ref{cor: all three parameters at Gamma from mathcal F}, thus we can smoothly extend them in a neighbourhood of $\Sigma_{q}$ in $\Omega_{q}^*$.
If we exclude the set $\mathcal{D}={c_P = 2c_S}$ then the equation \eqref{PDE} is elliptic and we can use strong unique continuation to prove $\left(\log\rho - \log \tilde{\rho}\right) = 0$ in the neighbourhood of $\Sigma_{q}$.

 In particular, we have shown
 \[
 \beta^-:= \log \rho - \log \tilde \rho = 0 \text{ in }B_x \cap \Omega_q.
  \]
  By Corollary \ref{cor: all three parameters at Gamma from mathcal F}, $\beta^-$ vanishes to infinite order at $\p \Omega_q$, and so we may extend all three parameters smoothly outside $\Omega_q$ with $\beta_-$ extended by $0$. Let $B_{ext}$ be a neighborhood of $x$ such that $B_{ext} \cap \Omega_q = B$. With the extended parameters, $\beta^-$ is extended as well, continues to satisfy \eqref{PDE}, and in particular, over $B_{ext}$.

  Now, if $\digamma = \frac{(c_P^2-c_S^2)(c_P^2-4c_S^2)}{c_P^4-5c_P^2c_S^2+8c_S^4}$ is bounded on $B_{ext}$, then \eqref{PDE} shows that
  \[
  |\Delta^2(\beta^-)| \leq C g(x, D\beta^-, D^2 \beta^-, D^3 \beta^-)
  \]
  so if we replace $\beta^-$ by $\beta^-(x-x_0)$ for and $x_0$ in $B_{ext} \setminus \bar \Omega_q$, then $\beta^-$ indeed satisfies an inequality of the form \eqref{e: inequality for UCT} as well as the hypothesis of Theorem \ref{thm: unique continuation}. Hence, $\beta^- = 0$ on $B_{ext}$.

Thus, we can recover $\rho$ in a neighbourhood of $\Sigma_{q}$, excluding the set $\mathcal{D}$. Since we already have recovered $c_{\PS}$, so, we can recover all the parameters near $\Sigma_{q}$. Thus, $S = \emptyset$ which proves the theorem.

\end{proof}

\section{Conclusion}
This paper implies that under certain geometric conditions, a piecewise smooth coefficient of a hyperbolic partial differential operator that is not in its principal symbol may be uniquely recovered. The essential ingredient is having a microlocal parametrix to represent solutions via an FIO in order to recover travel times and ``lower order'' polarization terms. This reduces the problem to a local tensor tomography problem. Even the scattering control construction may be generalized to whenever the transmission operator is an elliptic operator, which depends on the original operator and the transmission conditions (see \cite{CHKUElastic} for the construction).\\
\indent However, this does not automatically lead to a unique recovery since the tensor tomography problem always has a gauge freedom. Thus, determining whether the gauge freedom may be eliminated will have to be done on a case by case basis that will be unique to the partial differential operator at hand.

\appendix
\section{Proofs of statements in Section \ref{s: Weinstein calculus}}\label{s: proofs of Weinstein symbol stuff}

In this section, we provide the proofs of the statements made in Section \ref{s: Weinstein calculus}. First we prove Lemma \ref{l:PSIDO applied to u_tau} on the principal symbol of a PsiDO applied to a distribution.
\begin{proof}[Proof of Lemma \ref{l:PSIDO applied to u_tau}]
Let $P^*$ be the formal distributional adjoint of $P$ with symbol denoted $p^*$.
Then
\begin{align*}
\langle (P\g)^\tau_\phi, u \rangle &=
\langle P\g, e^{-i\tau x \cdot \xi} u (\sqrt\tau x) \rangle \tau^{n/2},
\\
&=
\langle \g, P^*e^{-i\tau x \cdot \xi} u (\sqrt\tau x) \rangle \tau^{n/2}
\\
P^*e^{-i\tau x \cdot \xi} u (\sqrt\tau x)
&= c_n \int e^{i(x-y)\cdot \eta - i \tau y \cdot \xi}
p^*(x,\eta) u (y \sqrt \tau) dy d\eta
\\
&= c_n \tau^{-n/2}
\int e^{ix\cdot \eta -i\frac{y}{\sqrt \tau} \cdot(\eta + \tau \xi)}
p^*(x,\eta) u (y) dy d\eta
\\
&= c_n \tau^{-n/2}
\int e^{ix\cdot \eta -iy \cdot(\eta/\sqrt \tau + \sqrt\tau \xi)}
p^*(x,\eta) u (y) dy d\eta,
\end{align*}
where we changed variables $\sqrt \tau y = y$ in the last line.
Change variables
\[
\tilde \eta = \eta/\sqrt \tau + \sqrt \tau \xi
\]
so that
\[
\eta = \tilde \eta \sqrt \tau - \tau \xi
\]
and $d\eta = \tau^{n/2} d\tilde \eta$. Substituting gives
\begin{align*}
&=
c_n  \int e^{ix\cdot(\tilde \eta \sqrt \tau - \tau \xi)-iy \cdot \tilde \eta}
p^*(x,\sqrt \tau \tilde \eta - \tau \xi) u (y ) dy d\tilde \eta
\\
&=
c_n e^{-i \tau x\cdot \xi} \int e^{i(x \sqrt \tau - y) \cdot \tilde \eta}
p^*(x,\sqrt \tau \tilde \eta - \tau \xi) u (y ) dy d\tilde \eta
\\
&:= e^{-i \tau x\cdot \xi} v(\sqrt \tau x),
\end{align*}
where $v^\tau(x) = c_n\int e^{i(x  - y) \cdot \tilde \eta}
p^*(\frac{x}{\sqrt \tau},\sqrt \tau \tilde \eta - \tau \xi) u (y ) dy d\tilde \eta$
which is a PsiDO applied to $u$. Hence, we have shown that
\[
\langle (P\g)^\tau_\phi, u \rangle
= \langle (\g)^\tau_\phi, v^\tau(x) \rangle,
\]
i.e. the family $(\g)^\tau_\phi$ applied to the ``test function'' $v^\tau(x)$ which also depends on $\tau$.

Now suppose $P$ is a classical PsiDO so $p^*$ is homogeneous of degree $m$. Let us do the first order Taylor expansion of $p^*(\frac{x}{\sqrt \tau},\sqrt \tau \tilde \eta - \tau \xi)$ around $x =0$ and $\tilde \eta = 0$:
\begin{equation}
p^*(\frac{x}{\sqrt \tau},\sqrt \tau \tilde \eta - \tau \xi)
= p^*(0, -\tau \xi) + \tau^{-1/2} x \cdot \p_x p^*(0, -\tau \xi)
+ \tau^{1/2} \tilde \eta \cdot  \p_{\eta} p^*(0, -\tau \xi).
\end{equation}
 However, note that $\p_\eta p^* \in \p_\eta p^*_m + S_{hom}^{m-2}
 = \tau^{m-1} \p_{\eta} p_m^*(0, -\xi) + S_{hom}^{m-2}$.
 Thus,
 \[
 p^*(\frac{x}{\sqrt \tau},\sqrt \tau \tilde \eta - \tau \xi)
= p^*(0, -\tau \xi) + O(\tau^{m-1/2}),
 \]
 so we obtain
 \[
 v^\tau(x) = p^*(0, -\tau \xi)u(x) + O(\tau^{m-1/2}).
 \]
We have now shown
 \[
 \langle (P\g)^\tau_\phi, u \rangle
 = \langle p^*(0, -\tau \xi) (\g)^\tau_\phi, u(x) \rangle + O(\tau^{N+m-1/2}),
 \]
 which gives use the desired result on the principal symbol.\\

For the latter statement in the lemma, we have
\begin{align*}
Pe^{-i\tau (x-x_0) \cdot \xi} u (\sqrt\tau (y-x_0))
&= c_n \int e^{i(x-y)\cdot \eta - i \tau (y-x_0) \cdot \xi}
p(x,\eta) u (y \sqrt \tau) dy d\eta
\\
&= c_n \tau^{-n/2}
\int e^{i((x-x_0)-y/\sqrt \tau)\cdot \eta -i\sqrt \tau y \cdot \xi}
p(x,\eta) u (y) dy d\eta
\\
&= c_n \tau^{-n/2}
\int e^{i\tilde x\cdot \eta -i\frac{y}{\sqrt \tau} \cdot(\eta + \tau \xi)}
p(x,\eta) u (y) dy d\eta
\\
&= c_n \tau^{-n/2}
\int e^{i\tilde x\cdot \eta -iy \cdot(\eta/\sqrt \tau + \sqrt\tau \xi)}
p(x,\eta) u (y) dy d\eta
\end{align*}
where we changed variables $\sqrt \tau (y-x_0) = y$ in the second line and we denote $\tilde x = x-x_0$.
Change variables
\[
\tilde \eta = \eta/\sqrt \tau + \sqrt \tau \xi
\]
so
\[
\eta = \tilde \eta \sqrt \tau - \tau \xi,
\]
and $d\eta = \tau^{n/2} d\tilde \eta$.
Substituting gives
\begin{align*}
&=
c_n  \int e^{i\tilde x\cdot(\tilde \eta \sqrt \tau - \tau \xi)-iy \cdot \tilde \eta}
p(x,\sqrt \tau \tilde \eta - \tau \xi) u (y ) dy d\tilde \eta
\\
&=
c_n e^{-i \tau \tilde x\cdot \xi} \int e^{i(\tilde x \sqrt \tau - y) \cdot \tilde \eta}
p(x,\sqrt \tau \tilde \eta - \tau \xi) u (y ) dy d\tilde \eta
\\
&:= e^{-i \tau \tilde x\cdot \xi} v(\sqrt \tau \tilde x),
\end{align*}
where $v^\tau(x) = c_n\int e^{i(x  - y) \cdot \tilde \eta}
p(\frac{x}{\sqrt \tau}+x_0,\sqrt \tau \tilde \eta - \tau \xi) u (y ) dy d\tilde \eta$ and can be viewed
as a parameter dependent PsiDO applied to $u$.

Now suppose $P$ is a classical PsiDO so $p$ is homogeneous of degree $m$. Let us do the first order Taylor expansion of $p(\frac{x}{\sqrt \tau}+x_0,\sqrt \tau \tilde \eta - \tau \xi)$ around $x =0$ and $\tilde \eta = 0$.
\begin{equation}
p(\frac{x}{\sqrt \tau}+x_0,\sqrt \tau \tilde \eta - \tau \xi)
= p(x_0, -\tau \xi) + \tau^{-1/2} x \cdot \p_x p(x_0, -\tau \xi)
+ \tau^{1/2} \tilde \eta \cdot  \p_{\eta} p(x_0, -\tau \xi) + O(\tau^{-1})
\end{equation}
 However, note that $\p_\eta p = \p_\eta p_m \text{ mod }S^{m-2}
 = \tau^{m-1} \p_{\eta} p_m(0, -\xi) \text{ mod } S^{m-2}$. Since $u$ is compactly supported, we also have
 \begin{equation}\label{e: the x term bounded by tau minus half}
 x \cdot \p_x p(x_0,-\tau \xi) u(\sqrt \tau x) = O( \tau^{m-1/2})
 \end{equation}
 since $|x| \lesssim \tau^{-1/2}$ on the support of $u$. The rigorous details follow the proof in \cite[Theorem 2.1.2]{weinstein1976} with appropriate cutoff functions.

 Thus,
 \[
 p(\frac{x}{\sqrt \tau}+x_0,\sqrt \tau \tilde \eta - \tau \xi)
= p(x_0, -\tau \xi) + O(\tau^{m-1/2}),
 \]
 so we obtain
 \[
 v^\tau(x) = p(x_0, -\tau \xi)u(x) + O(\tau^{m-1/2}).
 \]
Replacing $x$ by $\sqrt \tau \tilde x$ and using \eqref{e: the x term bounded by tau minus half} gives us exactly
$ Pu_\tau = p(x_0,-\tau \xi) u_\tau + O(\tau^{m-1/2}).$

\end{proof}

We now prove Proposition \ref{p: pullback of distribution symbol} on the symbol of a pullback of a distribution by a diffeomorphism.

\begin{proof}[Proof of Proposition \ref{p: pullback of distribution symbol}]
This follows easily from the proof of \cite[Proposition 1.4.1]{weinstein1976} once we make several observations. First, we have by construction
\begin{align*}
&\tau^{-n/2}\Big(\langle (\g \circ \theta)^\tau_{\phi \circ \theta}, u \rangle
 - \langle \g ^\tau_\phi \circ d_{x_0}\theta, u \rangle \Big )\\
&= \tau^{-n/2}\Big(\langle \g ^\tau_\phi , J_{\theta^{-1}}(y)u(\sqrt \tau (\theta^{-1}(y)-x_0)) \rangle
- \langle \g  , e^{-i\tau \phi(y)}J_{\theta^{-1}}(y_0)u(\sqrt \tau(d_{x_0}\theta)^{-1}(y-y_0)) \rangle \Big)
\\
&=\tau^{-n/2} \langle \g , e^{-i \tau\phi(y)}\Big[J_{\theta^{-1}}(y)u(\sqrt \tau (\theta^{-1}(y)-x_0)) - J_{\theta^{-1}}(y_0)u(\sqrt \tau(d_{x_0}\theta)^{-1}(y-y_0))\Big] \rangle
\\
&= \tau^{-n/2}\langle \g, e^{-i \tau\phi(y)}v^\tau(\sqrt \tau (y - y_0)) \rangle,
\end{align*}
where
\[
v^\tau(y) = J_{\theta^{-1}}(y/\sqrt \tau + y_0) u(\sqrt \tau (\theta^{-1}(y/\sqrt \tau + y_0)-x_0) )
- J_{\theta^{-1}}(y_0)u((d_{x_0}\theta)^{-1}y).
\]
Denote
\[
A := d_{y_0}\theta^{-1} = (d_{x_0}\theta)^{-1}.
\]
Using Taylor series, we have
\beq \nonumber
\theta^{-1}(Y+y_0) = \theta^{-1}(y_0) + AY
+ \sum_{jkl} b_{jkl}(Y)Y_k Y_l
\eeq
where $b_{jkl}(Y)$ is a smooth $n \times n$ matrix function.

Thus,
\beq \label{e: theta inv Taylor equation}
 \stau( \theta^{-1}(y/\stau) - x_0)
  = Ay + \tau^{-1/2} \sum_{jkl} b_{jkl}(y/\stau)y_k y_l.
\eeq
A Taylor expansion for $u$ is
\[
u(z+h) = u(z) + a(z, h) \cdot h
\]
where $a(z, h)$ is a smooth $n-$vector function of $z$ and $h$, and similarly
\[
J_{\theta^{-1}(y)}(y/\stau + y_0) = |A| + \tau^{-1/2} C(y/\stau) \cdot y
\]
where $C(Y)$ is a smooth $n$-vector function.
Combining these equations with \eqref{e: theta inv Taylor equation} we obtain
\begin{multline}
v^\tau(y)
=
J_{\theta^{-1}(y)}\left(y/\stau + y_0\right)u\big(Ay + \tau^{-1/2} \sum_{jkl}a_j(y,  b_{jkl}(y/\stau)y_k y_l\big)
- |A|u(Ay)
\\
= \tau^{-1/2}\left[\sum_{jkl}a\Big( Ay,\stau( \theta^{-1}(y/\stau) - x_0)\Big)  b_{jkl}\left(\frac{y}{\tau}\right)y_k y_l\right]J_{\theta^{-1}(y)}(y/\stau + y_0)
\\
+ \tau^{-1/2} u(Ay)C(y/\stau) \cdot y .
\end{multline}
Hence, the above equation is analogous to \cite[Equation (1.4.9)]{weinstein1976} and the rest of the proof follows closely to \cite[Proof of Proposition 1.4.1]{weinstein1976}.
\end{proof}

We know prove Proposition \ref{prop: symbol of FIO applied to distribution} regarding the symbol of an FIO applied to a distribution.

\begin{proof}[Proof of Proposition \ref{prop: symbol of FIO applied to distribution}]
It will ease notation to use $\tilde A$ in place of $A$ in the statement of the proposition.
We start with a test function $u \in \mathcal D(X)$ so
\begin{align}\label{e: A^t applied to test function}
\langle (\tilde A\g)^\tau_{x_0,\xi_0}, u \rangle &=
\langle \tilde A\g, e^{-i\tau (x-x_0) \cdot \xi_0} u (\sqrt\tau (x-x_0)) \rangle \tau^{n/2}
\\
&=\langle \g, \tilde A^te^{-i\tau (x-x_0) \cdot \xi_0} u (\sqrt\tau (x-x_0)) \rangle \tau^{n/2}
\end{align}
Set $A = \tilde A$ which is also an FIO associated to a canonical graph.
An FIO associated to a canonical graph has the form
\[
Au(y) = \int e^{iS(y,\xi)} a(y, \xi) \hat u(\xi) \ d\xi.
\]
Denote
$u_\tau = e^{i(x-x_0)\cdot \xi_0}u(\sqrt \tau(x-x_0))$,
and we have

Now, we view $u$ as a wave packet centered at $x_0$ around $\xi_0$.
We do the Taylor expansion
\[
S(y, \xi) = S(y,\xi_0) + S'_\xi(y,\xi_0) \cdot( \xi-\xi_0) + q(y,\xi-\xi_0)
\]
where $q$ is quadratic in $\xi-\xi_0$. By homogeneity, $S(y,\xi_0) - S'_{\xi}(y,\xi_0) \cdot \xi_0 = 0 $. So
\[
S(y, \xi) = S'_\xi(y,\xi_0) \cdot \xi + q(y,\xi-\xi_0)
\]
Thus,
\[
Au(y) = \int e^{iS'_{\xi}(y,\xi_0) \cdot \xi} e^{iq(y,\xi)} a(y, \xi) \hat u(\xi) \ d\xi.
\]
Next, $\chi$ being a canonical graph implies that the map
\[
y \mapsto S'_\xi(y,\xi_0) = x(y,\xi_0):=T(y)
\]
is a diffeomorphism near $y_0 = \pi_1\chi (x_0,\xi_0)$, where $\pi_1$ is the projection to the base manifold. Applying a pullback via the inverse of the above map gives
\[
Au(y(x)) = \int e^{ix \cdot \xi} e^{iq(y(x),\xi)} a(y(x), \xi) \hat u(\xi) \ d\xi
= B(x,D)u
\]
where $B$ is a PsiDO with symbol $b(x,\xi)=e^{iq(y(x).\xi)}a(y(x),\xi)$. Technically, $b$ is not a symbol but it will be after introducing cutoff functions as in the proof of \cite[Theorem 3.2.5]{weinstein1976}. Denote
$u_\tau = e^{i(x-x_0)\cdot \xi_0}u(\sqrt \tau(x-x_0))$ and
\[
Au_\tau(y(x)) =  \int e^{i x \cdot \xi} e^{i q(y(x),\xi)} a(y(x),  \xi) \hat u_\tau(\tau \xi) \ d\xi.
\]
 As in the proof of \cite[Theorem 3.2.5]{weinstein1976}, we divide the above integral into two pieces with a cutoff function $\mathcal K(\Psi), \Psi = \xi - \xi_0$ which is $1$ for $|\Psi| \leq 1/2$ and supported in the unit ball. Thus, without loss of generality, we can replace $b$ with $\mathcal K(\tau^{1/2}\Psi) b$, with the piece containing $1-\mathcal K(\tau^{1/2}\Psi)$ being $O(\tau^{-\infty})$ via the proof of \cite[Theorem 3.2.5]{weinstein1976} and can be ignored. Thus,
\[
 Au_\tau(y) = B(x,\tau D)u_{\tau}|_{x = x(y)}
\]
where $ b(x,\xi)= \mathcal K(\tau^{1/2}\Psi) e^{iq(y(x),\xi-\xi_0)}a(y(x),\xi)$ is now a symbol and $B(x, D)$ is a (non-classical) PsiDO with homogeneous principal symbol $a_\mu(y(x),\xi)$.
This shows that $Au_\tau$ is merely an application of a pseudodifferential operator followed by a diffeomorphism. Due to the cutoff $\mathcal K$, the proof of Lemma \ref{l:PSIDO applied to u_tau} goes through. We showed that to leading order

\[
B(x,D)u_\tau = b_\mu(x_0,\tau \xi_0) u_\tau(x) + O(\tau^{-1/2})
\]
Thus, we get
$A u_\tau = e^{iq(y_0,\xi_0)}a_\mu(y_0,\xi_0)u_\tau(x(y)) + O(\tau^{-1/2})$.

Next observe that
\[
u_\tau(x(y)) = e^{i\tau(x(y)-x_0)\cdot\xi_0}u(\sqrt \tau (x(y)-x_0))     \tau^{n/2}
\]
and note that the phase $\phi(y) = (x(y) - x_0)\cdot \xi_0 = (S'_\xi(y,\xi_0) - x_0)\cdot \xi_0 = S(y,\xi_0) - x_0\cdot \xi_0$ satisfies the conditions in \cite{weinstein1976} for the phase, and the so the phase invariance results there apply. We have $\phi(y_0) = 0$ and $\phi'(y_0) = S'_y(y_0,\xi_0) = \eta_0$, so \cite[Proposition 1.2.1]{weinstein1976} shows we may replace $\phi$ by the phase $(y-y_0)\cdot \eta_0$ without changing the principal symbol.

Let $\theta$ be defined such that $\theta^{-1}(y) = T(y)$. Thus, using Proposition \ref{p: pullback of distribution symbol} we have
 \begin{align}
 \langle g, u_\tau(x(y) \rangle
& =  \langle J^{-1}_{\theta^{-1}}g, J_{\theta^{-1}}u_\tau(x(y) \rangle
\\
 &= \langle (J_{\theta}g\circ \theta)^\tau_{\phi \circ \theta}, u(x) \rangle
 = \langle (J_{\theta}g)^\tau_{\phi} \circ d_{x_0}\theta, u(x) \rangle + O(\tau^{N-1/2}).
 \end{align}
 We also compute
 \begin{equation}
 \sigma_{\chi(y_0,\eta_0)} (J_\theta g \circ \theta) = \sigma_{y_0,\eta_0}(\theta)[ \sigma_{y_0,\eta_0}(J_\theta g)]
 = J_\theta(y_0)\sigma_{y_0,\eta_0} (\theta) [\sigma_{y_0,\eta_0}(g)]
 \end{equation}
 using that multiplication by $J_\theta$ can be viewed as an application of a $0$'th order classical pseudodifferential operator.
 Piecing everything together gives us
 \[\sigma_{x_0,\xi_0}(\tilde A g)
 = a(y_0,\tau \xi_0)J_{T^{-1}}(y_0)\sigma_{y_0,\eta_0} (T^{-1}) [\sigma_{y_0,\eta_0}(g)].
 \]
 
\end{proof}

\section{Elastic wave parametrix with scattering}
\label{a: elastic parametrix}

In this section, we summarize the microlocal parametrix construction for the system \eqref{elastic_Neumann} with transmission conditions that was used to prove Proposition \ref{Neumann_tail_lemma} to recover the wave speeds, which was already done in \cite{CHKUElastic}.

\subsection{Cauchy data and propagator}
Recall the space $\C$ in \eqref{Cauchy_data_set}, where we define the Cauchy data.
Let $\Psi=(\psi_0,\psi_1) \in \mathbf{C}$ be some Cauchy data. Observe that for $\Psi \in \C$, there exist unique solution $U=U_{\Psi} \in C(\R,H^1(\R^3))$ of the initial value problem \eqref{elastic_Neumann}.
We define the \emph{propagator}
\begin{equation*}
F: \C \to C(\R,H^1(\R^3)) \quad \mbox{ as }\quad
F(\Psi) := U(t,x), \quad \mbox{in }\R\times\R^3.
\end{equation*}
The Cauchy propagation operator is defined as
\begin{equation}\label{propagation_op}
\mathcal{F}_t : \C \mapsto \C
\quad\mbox{ as }\quad
\mathcal{F}_t \Psi := r_{t}\circ F(\Psi) = \left(U(t,\cdot),\p_tU(t,\cdot)\right),
\end{equation}
where $U=U_{\Psi}$ and $r_t$ is the restriction at time $t$.

Let us define $\lambda_j$, $\mu_j$, $\rho_j$ to be the smooth extensions of the parameters $(\lambda,\mu,\rho)|_{\Omega_j}$ outside $\Omega_j$ so that the solution operators $E_0$ and $E_1$ may be defined for each set of such parameters. We define the Cauchy to solution operators $\mathbf{J}_{\C\to\mathbf{S}}$ and $\mathbf{J}_{\C\to\mathbf{S}_{+}}$ as in \cite[Section 4.1]{CHKUElastic}. Loosely speaking, $\mathbf{J}_{\C\to\mathbf{S}}$ maps the Cauchy data from $\C$ to the unique solution $U_{\Psi} \in C\left(\R,H^{1}(\Omega)\right)$ and $\mathbf{J}_{\C\to\mathbf{S}_+}U$ in same as $\mathbf{J}_{\C\to\mathbf{S}}U$ but only propagates forward in time.
We also borrow the the following operators from \cite{CHKUElastic}, given as
\begin{equation*}
\begin{aligned}
\mathbf{J}_{\C \to \partial}	&: \mbox{ maps Cauchy data from }\C \mbox{ to the boundary }C(\R,H^{1/2}(\partial\Omega_j)),\\
\mathbf{J}_{\C \to \partial_+}	&: \mbox{ maps Cauchy data from }\C \mbox{ to the boundary }C(\R,H^{1/2}(\partial\Omega_{j})),\\
\mathbf{J}_{\partial \to \mathbf{S}}&: \mbox{ maps boundary data }C(\R,H^{1/2}(\partial\Omega_{j+1})) \mbox{ to the solution } C(\R,H^{1}(\Omega)),\\
\mathbf{J}_{\partial\to\partial}	&: \mbox{ maps boundary data }C(\R,H^{1/2}(\partial\Omega_{j})) \mbox{ to a different boundary }C(\R,H^{1/2}(\partial\Omega_{j+1})).
\end{aligned}
\end{equation*}


\subsection{P/S polarization projections}
Let us construct a $\PS$-mode projector $\Pi_{\PS}$, microlocally projects the elastic wave-field $u$ to the compressive ($P$) and the sheer ($S$) wave-fields for a small time-interval, as
\begin{equation*}
\Pi_{\PS} u := \sum_{\pm, l, k} \int_{\R^3} e^{i\phi^{\pm}_{\PS}(t,x,\xi)} a^{\cdot,l}_{\pm,k,\PS}(t,x,\xi) (\hat{\phi}_{k})_l(\xi)\,d\xi,\quad \mbox{where }l=1,2,3,\quad k=1,2.
\end{equation*}

Observe that $p(t,x,\tau,\xi)$ has eigenvalues $\rho\left(\tau^2-c_P^2|\xi|^2\right)$ and $\rho\left(\tau^2-c_S^2|\xi|^2\right)$ with multiplicity $1$ and $2$ respectively (see \eqref{princ_symb}). The matrix $p$ can be diagonalised and there exists unitary matrix $V(t,x,\tau,\xi)$ such that
\begin{equation*}
V p(t,x,\tau,\xi) V^{-1} = \rho\begin{pmatrix}\tau^2-c_P^2|\xi|^2 &0 &0\\0 &\tau^2-c_S^2|\xi|^2 &0\\0 &0 &\tau^2-c_S^2|\xi|^2\end{pmatrix} = D(t,x,\tau,\xi).
\end{equation*}
Let us now consider the symbol
\begin{equation}\label{Symbol_mode-projector}
\Pi_P(t,x,\tau,\xi) := V^{-1}\bmat{1 &0 &0\\0 &0 &0\\0 &0 &0}V
\qquad \mbox{and}\qquad
\Pi_S(t,x,\tau,\xi) := V^{-1}\bmat{0 &0 &0\\0 &1 &0\\0 &0 &1}V.
\end{equation}
Observe that the symbol of $\Pi_{\PS}$ is homogeneous of order $0$ in $|\xi|$ and thus $\Pi_{\PS}$ represents a $0$-th order pseudodifferential operator.

\subsection{ Transmission conditions}
Let $u_I$ be an incoming wave starting at $\Omega_{t_j}^*$ and travelling towards $\Gamma_j=\Sigma_{t_j}$, for $j=0,1,\dots,m$. At $\Gamma_j$ it hits the interface and breaks into two parts $u_R$ the reflected wave and $u_T$ the transmitted wave.
From now on, we will write the subscript $\bullet=I/R/T$ to denote the incoming, reflected or transmitted quantities.

Let us define the Neumann operator at an interface $\Gamma_j$, for $j=0,1,\dots,m$ as
\begin{equation}\label{Neumann}
\mathcal N_{\bullet} u_{\bullet} = (\lambda \div \otimes \text{I} + 2\mu \hat \nabla)u_{\bullet} \cdot \nu_{\bullet}|_\Gamma,
\end{equation}
where $\nu$ is an outward unit normal vector at $\Gamma_j$.
The elastic transmission conditions on the interface $\Gamma_j$, for $j=0,1,\dots,m$ are given as
\begin{equation}\label{transmission_cond}
\begin{aligned}
u_I + u_R =& u_T\\
\mathcal{N}_{I}u_I + \mathcal{N}_{R}u_R =& \mathcal{N}_{T}u_T.
\end{aligned}
\end{equation}
The system \eqref{transmission_cond} can be microlocally inverted to obtain the reflection and the transmission operators $M_R$ and $M_T$, where $M_R u_I|_{\Gamma_j}=u_R$ and $M_T u_I|_{\Gamma_j} = u_T$. Note that the operators $M_R$, $M_T$, are $\Psi$DOs of order $0$ on $\R\times\Gamma_j$, have been calculated explicitly in \cite{CHKUElastic}. The operators $M_R$, $M_T$ changes from interface to interface, but for the sake of notational simplicity we do not mention the influence of $j=0,1,\dots,m$ on them.

\subsection{Parametrix}
Define the operator $\iota: \Sigma_{t,\pm} \to \Sigma_{t,\pm}$ changes from one boundary to its copy in $\Sigma_{t,\pm}$. Consider the boundary operator $M=M_R+\iota M_T$.
To understand the propagation of the wave-field through this broken medium let us consider
\begin{align}\label{Propagation_wave}
\widetilde{F} :&= \mathbf{J}_{\C\to \mathbf{S}} + \mathbf{J}_{\partial\to \mathbf{S}} \sum_{k=0}^{\infty}  \left(\mathbf{J}_{\partial\to\partial}M\right)^k\mathbf{J}_{\C\to\mathbf{S}}
\\
\tilde R_{2T} &= r_{2T} \circ \tilde F,
\end{align}
where $r_{2T}$ is restriction to $t=2T$. Again omitting the proof, it can be shown that $\tilde F \equiv F$ and $\tilde R_{2T} \equiv R_{2T}$ away from glancing rays. In the elastic case it means that $\tilde R_{2T}h_0 \equiv R_{2T}h_0$ for initial Cauchy data $h_0$ such that every broken bicharacteristic originating in $\text{WF}(h_0)$ is disjoint from the both the $P$ and $S$ glancing sets described in \cite{SUV2019transmission}. Recalling that $M = M_R + M_T$, we may write $\tilde R_{2T}$ as a sum of graph FIO indexed by sequences of reflections and transmissions:
\begin{align}
&\tilde R_{2T} = \sum_{ s \in \{R,T\}^k, \lambda \in \{P, S\}^{k+1}} \tilde R_{s,\lambda},
\qquad \qquad
\tilde R_{()} = r_{2T}J\CtoS \\
&\tilde R_{(s_1,\dots,s_k; \lambda_0, \dots, \lambda_k)} = r_{2T}J\BtoS \Pi_{\lambda_k}M_{s_k}J\BtoB \cdots \Pi_{\lambda_2}M_{s_2}J\BtoB \Pi_{\lambda_1}M_{s_1}J\CtoB \Pi_{\lambda_0}.
\end{align}
The solution operator $\tilde F$ likewise decomposes into analogous components $\tilde F_a$.

\bibliographystyle{plain}
 \bibliography{ScatteringControl}
\end{document}